\documentclass[a4paper,9pt]{amsart}

\usepackage{tikz}

\usepackage{pgflibraryarrows}
\usepackage{pgflibrarysnakes}
\usepackage{amsmath}
\usepackage{amssymb}
\usepackage{amsthm}
\usepackage[all]{xy}
\usepackage[latin1]{inputenc}        
\usepackage{amscd}
\usepackage{mathrsfs}
\usepackage[mathcal]{eucal}
\usepackage{float}
\author{Cinzia Bisi, Giampiero Chiaselotti}
\address{Bisi Cinzia\\Dipartimento di Matematica, Universit\'a di Ferrara, Via Machiavelli 35, 44121, Ferrara, Italy}
\email{bsicnz@unife.it}
\email{bisi@math.unifi.it}
\address{Chiaselotti Giampiero\\Dipartimento di Matematica, Universit\'a della Calabria, Via Pietro Bucci,
Cubo 30B, 87036 Arcavacata di Rende (CS), Italy.}
\email{chiaselotti@unical.it}
\title[Extension results for boolean maps]{Extension results for boolean maps and a class of systems of linear inequalities}
\date{\today}

\newtheorem{inizio}{Lemma}[section]
\newtheorem{theorem}[inizio]{Theorem}
\newtheorem{corollary}[inizio]{Corollary}
\newtheorem{proposition}[inizio]{Proposition}

\newtheorem{definition}[inizio]{Definition}

\newtheorem{o-problem}[inizio]{Open Problem}

\newtheorem*{teo-L}{Theorem}

\newtheorem*{corollary-s}{Corollary}
\theoremstyle{definition}
\newtheorem{example}[inizio]{Example}


\setcounter{section}{-1}

\begin{document}

\subjclass{Primary: 05D05}
\keywords{Involution posets, boolean maps, systems of linear inequalities.}

\abstract
In this paper we introduce the notion of {\it core} for two specific classes of boolean maps on finite involution posets (which are a generalization of the boolean lattices) and we prove some extension results for such families of boolean maps. Through the properties of the core, we provide a complete characterization of such maps. The main purpose of such abstract results is their application to the study of the compatibility of a particular class of systems of linear inequalities related to a conjecture of Manickam, Mikl\"os and Singhi (\cite{ManSin88}, \cite{ManMik87}), still unsolved and that can be considered dual to the theorem of Erd\"os-Ko-Rado \cite{erd-ko-rad}.
\endabstract

\maketitle
\section{Introduction}
In the sequel $n$ and $r$ will denote two fixed integers such that $1 \le r \le n$.\\
 We set $I(n,r) =\{ \tilde{r}, \cdots, \tilde{1}, \overline{1}, \cdots, \overline{n-r}\}$
  and we consider $I(n,r)$ as a $n$-set in which we simply have marked the difference between the first $r$ formal symbols $\tilde{r}, \cdots, \tilde{1}$ and the remaining $(n-r)$ formal symbols $\overline{1}, \cdots, \overline{n-r}$ . Let us suppose that we have $r$ real variables $x_{\tilde{r}},\cdots, x_{\tilde{1}}$  and other $(n-r)$ real variables $y_{\overline{1}},\cdots, y_{\overline{n-r}}$. In the sequel, to simplify the notations, we write simply $x_i$ instead of $x_{\tilde{i}}$ and $y_j$ instead of $y_{\overline{j}}$. However, it is important to mark the fact that the index $i$ in $x_i$ correspond to the symbol $\tilde{i}$, while the index $i$ in $y_i$ correspond to the symbol $\overline{i}$, and that $\tilde{i} \neq \overline{i}$.
We call $(n,r)$-{\it system} of {\it size} $p$ a system $\mathcal{S}$ of linear inequalities having the following form:

\begin{equation}\label{(n,r)-system}
\mathcal{S} :  \left\{ \begin{array}{l}
                    x_r \ge \cdots \ge x_1 \ge 0 > y_1 \ge \cdots \ge y_{n-r} \\
                    \sum_{i \in A_1} t_i \ge 0  \,\,\,(\textrm{or} \,\,\,  < 0)\\
                    \sum_{i \in A_2} t_i  \ge 0  \,\,\,(\textrm{or} \,\,\,  < 0)\\
                       \cdots  \\
                       \cdots  \\
                    \sum_{i \in A_p} t_i  \ge 0  \,\,\,(\textrm{or} \,\,\,  < 0)\\
                    \end{array}
  \right.
\end{equation}

where $A_1, \cdots, A_p$ are non-empty and different subsets of $I(n,r),$ all different from the singletons of $I(n,r);$ moreover $t_i=x_i$ if $i\in \{ \tilde{r}, \cdots, \tilde{1}\}$ and $t_i=y_i$ if $i\in \{ \overline{1}, \cdots, \overline{n-r}\}$. Formally we set $\sum_{i \in \emptyset} t_i = 0$. When the subsets $A_1, \cdots, A_p$ coincide with all the possible subsets of $I(n,r)$ different from the singletons and from the empty set, we say that the $(n,r)$-system (\ref{(n,r)-system}) is {\it total}. Furthermore, when in (\ref{(n,r)-system}) appears the inequality

\begin{equation} \label{DisegPeso}
 x_r + \cdots +  x_1 + y_1 + \cdots + y_{n-r} \ge 0 \,\,\,(\textrm{or} \,\,\,  < 0)
 \end{equation}

 we say that it is a $(n,r)$-{\it positively weighted} system (or a $(n,r)$-{\it negatively weighted} system).

 A $(n,r)$-positively [negatively] total weighted system of type (\ref{(n,r)-system}) can be identified with a boolean total map $A$ defined on the lattice $(S(n,r), \sqsubseteq )$. The lattice $(S(n,r), \sqsubseteq )$ has been introduced and studied in \cite{BisChias}. A subsystem of (\ref{(n,r)-system}) which is also equivalent to it, can be identified  with a particular restriction of the map $A$ which represents (\ref{(n,r)-system}): such a restriction of $A$ is called {\it core} of $A$.

 In this paper we introduce and study two particular classes of boolean maps on involution posets (a more general version of the boolean lattices). In particular, we shall prove some extension results for such maps and we shall apply these results to the study of the compatibility and solvability of the total $(n,r)$-systems. The classical approaches to the study of the linear inequalities systems usually make use of alternative theorems and operational research's methods (see for example \cite{Goffin80},  \cite{PangEtAltri2007}, \cite{Spingarn85}); our approach to the study of the $(n,r)$-systems uses instead the properties of a particular class of boolean maps defined on some types of involution posets introduced in \cite{BisChias}.

 Our motivation to study the combinatorial properties of a $(n,r)$-system of type (\ref{(n,r)-system}) is an attempt to answer to a conjecture of Manickam, Mikl\"os and Singhi (see \cite{ManMik87}, \cite{ManSin88}) that can be considered dual to the famous theorem of Erd\"os-Ko-Rado \cite{erd-ko-rad}, \cite{katona}. This conjecture is connected with the first distribution invariant of the Johnson association scheme (see \cite{bier-man}, \cite{manic-88}, \cite{manic-91}, \cite{ManSin88}). The distribution invariants were introduced by Bier \cite{bier}, and later investigated in \cite{bier-delsarte}, \cite{manic-86}, \cite{manic-88}, \cite{ManSin88}. Also, as pointed out in \cite{sriniv-98}, this conjecture settles some cases of another conjecture on multiplicative functions by Alladi, Erd\"os and Vaaler, \cite{erdos}. Partial results related to the Manickam-Mikl\"os-Singhi conjecture have been obtained also in \cite{bhatta-2003}, \cite{bhatt-thesis}, \cite{Chias02}, \cite{ChiasInfMar08}, \cite{chias-mar-2002}.

 Studying the Manickam-Mikl\"os-Singhi conjecture and some related extremal sum problems (see \cite{BisChias}, \cite{ManMik87}, \cite{ManSin88}), it can be necessary to determine an equivalent subsystem of (\ref{(n,r)-system}) which has the minimal possible number of inequalities. Such a subsystem can be identified with a core of $A$ having minimal cardinality. In this paper we show that if $A$ is the particular boolean map which represents (\ref{(n,r)-system}), then $A$ has exactly a unique core of minimal cardinality (we call it the {\it fundamental core} of $A$) and we also show as such core is made.
The utility of the core and of its properties will appear clear in the forthcoming paper \cite{chias-mar-nardi}.
In determining our results, we see that the essential property of the lattice $(S(n,r), \sqsubseteq )$ is the existence in the lattice $(S(n,r), \sqsubseteq )$ of a unary operation $c$ (the complement function) such that $i)$ $c(c(w)) = w$ for each $w \in S(n,r)$; $ii)$ $c(v) \sqsubseteq c(w)$ whenever $w \sqsubseteq v$; $iii)$ $c(v) \neq v$ for each $v$. The first two properties of $c$ are those that define an involution poset: recent studies related to this particular class of posets can be find in \cite{agha-gree-2001} and in \cite{brenneman-altri}. The properties $i)$,$ii)$ and $iii)$ define what we call a {\it strongly} involution poset (SIP). Therefore the natural context in which studying our problem in its most general form is the context of the finite strongly involution posets, that are also a more general version of the finite boolean algebras.

 Let us conclude this introduction observing that in this paper we carry out the research project started in \cite{BisChias}, which consists in the study of the extremal sum problems settled in \cite{ManMik87} and in \cite{ManSin88} (among which the Manickam-Mikl\"os-Singhi conjecture), in the setting of the order combinatorial theory. For further details we refer to \cite{BisChias}. In particular, our goal is to try to build a family of boolean maps on $S(n,r)$ that captures all the properties of the $(n,r)$-compatible total systems. This is important because the combinatorial properties of a $(n,r)$-system can be studied more easily if we see such system as a particular boolean map on $S(n,r)$. In this paper we define two families of boolean maps, each of one captures respectively some (but not all) combinatorial properties of a $(n,r)$-compatible positively [negatively] weighted system and we determine two families of subsets of $S(n,r)$ that are respectively in bijective correspondence with the previous families of boolean maps. With an image that recall the differential calculus, we can imagine these two families of boolean maps as a set of ``critical points", inside of which try to find the ``extreme points", i.e. those boolean maps that correspond to a compatible system. Therefore, the study of such families of boolean maps is important to delimit the research to the $(n,r)$-compatible systems.
 The problem that remain open is : what is a family of boolean maps on $S(n,r)$ that captures all the properties that characterize a $(n,r)$-compatible system?
 In the last section we suggest a family of boolean maps candidate for this job.

\section{$(n,r)$-systems vs boolean maps}

 Let $\mathcal{S}$ , $\mathcal{S'}$ be two $(n,r)$-systems: we say that they are {\it equals} (in symbols $\mathcal{S} = \mathcal{S'}$) if they have exactly the same inequalities, otherwise we say that they are {\it different} (in symbols $\mathcal{S} \neq \mathcal{S'}$). If they are both compatibles (i.e. they have solutions) and equivalents (i.e. they have the same solutions) we shall write $\mathcal{S}\equiv \mathcal{S'}$. We denote by $Syst(n,r)$ [$TSyst(n,r)$] the set of all the $(n,r)$-systems [$(n,r)$-total systems],  by $CSyst(n,r)$ [$CTSyst(n,r)$] the set of all the  $(n,r)$-systems that are also compatibles [totals and compatibles]; by $W_+Syst(n,r)$, $W_+CSyst(n,r)$, $W_+TSyst(n,r)$, $W_+CTSyst(n,r)$  we respectively denote the set of all the $(n,r)$-positively weighted systems, the $(n,r)$-compatible positively weighted systems, the $(n,r)$-total positively weighted systems, the $(n,r)$-compatible total positively weighted systems and by $W_-Syst(n,r)$, $W_-CSyst(n,r)$, $W_-TSyst(n,r)$, $W_-CTSyst(n,r)$ their analogue but negatively weighted.
 Let us note that if $\mathcal{S}, \mathcal{S'}\in CTSyst(n,r)$ and $\mathcal{S} \neq \mathcal{S'}$, then $\mathcal{S}$ and $\mathcal{S'}$ can not be equivalent.




 Now we briefly recall the definition of the lattice $S(n,r)$ that we have introduced in \cite{BisChias}.

 We set $A(n,r)=I(n,r)\cup \{0^§\}$, where $0^§$ is a new formal symbol. We introduce on $A(n,r)$ the following total order:
\begin{equation}\label{orderA(n,r)}
\overline{n-r} \prec \cdots \prec \overline{2} \prec \overline{1} \prec 0^§ \prec \tilde {1} \prec \tilde{2}\prec \cdots \prec \tilde{r},
\end{equation}
where $\overline{n-r}$ is the minimal element and $\tilde{r}$ is the maximal element in this chain.
If $i,j \in A(n,r),$ then we write $i \preceq j$ for $i=j$ or $i \prec j$.
 We denote by $(\mathcal{C}(n,r), \sqsubseteq)$ the $n$-fold cartesian product poset $A(n,r)^n$. An arbitrary element of $\mathcal{C}(n,r)$ can be identified with an $n$-string $t_1\cdots t_n$ on the alphabet $A(n,r)$. Therefore, if $t_1\cdots t_n$ and $s_1\cdots s_n$ are two strings of $\mathcal{C}(n,r)$, we have
 \begin{equation*}
 t_1\cdots t_n \sqsubseteq s_1\cdots s_n \Longleftrightarrow t_1 \preceq s_1, \cdots , t_n \preceq s_n.
\end{equation*}

We introduce now $S(n,r)$ as a particular subset of $\mathcal{C}(n,r)$.

A string of $S(n,r)$ is constructed as follows: it is a formal expression of the following type
\begin{equation} \label{stringa}
i_r \cdots  i_1 \,\,\, | \,\,\,  j_1 \cdots  j_{n-r},
\end{equation}
such that:

i) $i_1, \cdots, i_r \in \{ \tilde{1}, \cdots, \tilde{r}, 0^§ \},$

ii) $j_1, \cdots, j_{n-r} \in \{ \overline{1}, \cdots, \overline{n-r}, 0^§ \},$

iii) $i_r \succeq \cdots \succeq i_1 \succeq 0^§ \succeq j_1 \succeq \cdots \succeq j_{n-r},$

iv) the unique element in (\ref{stringa}) which can be repeated is $0^§$.

Then $S(n,r)$ is the subset of all strings of $\mathcal{C}(n,r)$ having the previous form with the induced order from $\sqsubseteq$.
The formal symbols which appear in (\ref{stringa}) will be written without  $\;\tilde{}\;$, $\;\bar{}\;$, and $\;^§\;$ and the vertical bar $|$ in (\ref{stringa}) will indicate that the symbols on the left of $|$ are in
$\{ \tilde{1}, \cdots, \tilde{r}, 0^§ \}$ and the symbols on the right of $|$ are elements in $\{ 0^§, \overline{1}, \cdots, \overline{n-r} \}.$
\begin{example}
If $n=3$ and $r=2,$ then $A(3,2)= \{ \tilde{2} \succ \tilde{1} \succ 0^§ \succ \overline{1} \}.$
Hence $S(3,2)= \{ 21|0, \,\,\, 21|1,\,\,\, 10|0,\,\,\, 20|0,\,\,\, 10|1,\,\,\, 20|1,\,\,\, 00|1,\,\,\, 00|0 \}.$\\
\end{example}

In \cite{BisChias} it has been proved that:

$i)$ $(S(n,r), \sqsubseteq)$ is a graded lattice with minimal element $0\cdots 0|12\cdots(n-r)$ and maximal element $r(r-1)\cdots21|0\cdots0$;

$ii)$ $(S(n,r), \sqsubseteq)$ has the following unary complementary operation $c$:

$(p_1\cdots p_k\,\,0\cdots0|0\cdots0\,\,q_1\cdots q_l)^c = p'_1\cdots p'_{r-k}\,\,0\cdots0|0\cdots0\,\,q'_1\cdots q'_{n-r-l}$,

where $\{p'_1,\cdots ,p'_{r-k}\}$ is the usual complement of $\{p_1,\cdots,p_k\}$ in $\{\tilde{1}, \cdots, \tilde{r}\}$, and\\
$\{q'_1,\cdots ,q'_{n-r-l}\}$ is the usual complement of $\{q_1,\cdots,q_l\}$ in $\{\overline{1}, \cdots, \overline{n-r}\}$ (for example, in $S(7,4)$, we have that $(4310|001)^c = 2000|023$).

Let us consider a $(n,r)$-system $\mathcal{S}$ as in (\ref{(n,r)-system}). Since there is an obvious bijection between the power set $\mathcal{P}(I(n,r))$ and $S(n,r)$, all the subsets $A_1,\cdots,A_p$ in (\ref{(n,r)-system}) can be identified with strings of $S(n,r)$, that we denote by $w_1,\cdots,w_p$ (for example, if $n=7, r=4$, we identify the subset $\{\tilde{1},\tilde{3},\tilde{4},\overline{1}\}$ with the string $4310|001$, or the subset $\{\tilde{2},\overline{2},\overline{3}\}$ with $2000|023$). Let us note that $0\dots0|0\cdots0$ will be identified always with the empty subset of $I(n,r)$).

By Proposition 6.1 of \cite{BisChias} it results that if $w_k \sqsubseteq w_j$ for some $k,j$, then $\sum_{i \in A_k} t_i \le \sum_{i \in A_j} t_i$.

We denote with $(S(n,r) \rightsquigarrow {\bf 2})$ the poset of the boolean partial maps on $S(n,r)$ (\cite{dav-pri-2002}).
We set now $\xi _r= r0 \cdots 0|0 \cdots 0$, $\cdots$, $\xi _1= 10 \cdots 0|0 \cdots 0$, $\xi _0= 00 \cdots 0|0 \cdots 0$, $\eta _1= 0 \cdots 0|0 \cdots01$, $\cdots$, $\eta _{n-r}= 0 \cdots 0|0 \cdots 0(n-r)$, and

$\Omega_{\mathcal{S}} = \{w_1,\cdots,w_p, \xi _r,\cdots,\xi _1,\xi _0,\eta_1,\cdots, \eta_{n-r}\}$.

\begin{definition}
Let $\mathcal{S} \in Syst(n,r)$. A $\mathcal{S}$-boolean partial map ($\mathcal{S}-$BPM) $A_{\mathcal{S}} : \Omega_{\mathcal{S}} \subseteq S(n,r) \to \bf{2}$ is defined as follows;\\ for $j\in \{1,\dots,p\}$,

$$
A_{\mathcal{S}}(w_j)= \left\{ \begin{array}{lll}
                  P & \textrm{if} & \sum_{i \in A_j} t_i \ge 0 \\
                  N & \textrm{if} & \sum_{i \in A_j} t_i < 0 \\
                  \end{array} \right.
$$

$A_{\mathcal{S}}(\xi_0)= A_{\mathcal{S}}(\xi_1)=\cdots=A_{\mathcal{S}}(\xi_r)=P$ and $A_{\mathcal{S}}(\eta_1)=\cdots=A_{\mathcal{S}}(\eta_{n-r})=N$.
\end{definition}

\begin{definition}
If $\mathcal{S}, \mathcal{S'} \in Syst(n,r)$, we set $\mathcal{S} \lesssim \mathcal{S'}$ if $\mathcal{S}$ is a sub-system of $\mathcal{S'}$.
\end{definition}

This obviously defines a partial order $\lesssim$ on $Syst(n,r)$.
We denote with $\mathcal{B}(n,r)$ the sub-poset of all the boolean partial maps $A\in (S(n,r) \rightsquigarrow {\bf 2})$ such that\\ $\xi_r,\cdots,\xi_1,\xi_0,\eta_1,\cdots,\eta_{n-r}\in dom(A)$ and $A(\xi_0)= A(\xi_1)=\cdots=A(\xi_r)=P$, $A(\eta_1)=\cdots=A(\eta_{n-r})=N$ and with $\mathcal{BT}(n,r)$ the subset of all the total maps of $\mathcal{B}(n,r)$. Then the map $\chi : Syst(n,r) \rightarrow \mathcal{B}(n,r)$ such that $\chi(\mathcal{S})=A_{\mathcal{S}}$, for each $\mathcal{S}\in Syst(n,r)$, is an isomorphism of posets. We denote by $\tau : \mathcal{B}(n,r) \rightarrow Syst(n,r)$ the inverse of $\chi$ and we set $\tau(A)=\mathcal{S}_A$ if $A\in \mathcal{B}(n,r)$. Obviously the restriction of $\chi$ to $\mathcal{BT}(n,r)$ defines an isomorphism between $\mathcal{BT}(n,r)$ and $TSyst(n,r)$, and we continue respectively to denote with $\chi$ and $\tau$ this isomorphism and its inverse.

Our principal question is then :

\medskip

Q1) What are the maps in $\chi(W_+CTSyst(n,r))$ and in $\chi(W_-CTSyst(n,r))$?

\medskip

Roughly speaking, what are the order-properties that characterize a boolean map $A\in \mathcal{BT}(n,r)$ in such a way that $\mathcal{S}_A$ is a compatible system?



We recall, see \cite{BisChias}, that a $(n,r)-${\it function} is an application $f:A(n,r) \to \mathbb{R}$ such that
\begin{equation}\label{(n,r)-function}
f(\tilde{r}) \ge \cdots \ge f(\tilde{1}) \ge f(0^§)=0 > f(\overline{1}) \ge \cdots \ge f(\overline{n-r}).
\end{equation}
We denote by $F(n,r)$ the set of the $(n,r)-$functions.
The function $f$ is a $(n,r)-${\it positive weight function} [{\it negative weight function}] if (\ref{(n,r)-function}) holds and if:

\begin{equation}\label{(n,r)-Wfunction}
f(\tilde{r}) + \cdots + f(\tilde{1}) + f(\overline{1}) + \cdots + f(\overline{n-r}) \ge 0 [<0].
\end{equation}
We denote by $WF_+(n,r)$ the set of the $(n,r)-$positive weight functions and with $WF_-(n,r)$ the set of the $(n,r)-$negative weight functions.

We say that a $(n,r)$-function $f$ is a {\it solution} of the system (\ref{(n,r)-system}) if the assignment

\begin{equation}\label{assignment}
x_r=f(\tilde{r}),\cdots, x_1=f(\tilde{1}), \cdots y_1=f(\overline{1}), \cdots , y_{n-r} = f(\overline{n-r})
\end{equation}

provides a solution of (\ref{(n,r)-system}).

If $f$ is a $(n,r)-$function, the {\it sum function} induced by $f$ on $S(n,r)$
$$\Sigma_f : S(n,r) \to \mathbb{R}$$
is the function that associates to $w=i_1 \cdots i_r \,\,\, |\,\,\, j_1 \cdots j_{n-r}\in S(n,r)$,
the real number $\Sigma_f (w)=f(i_1)+ \cdots + f(i_r) + f(j_1) + \cdots + f(j_{n-r}),$ see \cite{BisChias},

and we also define the map
$$
A_f : S(n,r) \to \bf{2}
$$
setting
$$
A_f (w) = \left\{ \begin{array}{lll}
                  P & \textrm{if} & \Sigma_f (w) \ge 0 \\
                  N & \textrm{if} & \Sigma_f (w) < 0.
                  \end{array} \right.
$$


If $f\in F(n,r)$, we denote by $\mathcal{S}_f$ the $(n,r)$-compatible total system having $f$ as one of its solutions and we set $Pos(f) = \{w\in S(n,r) : A_f(w)=P \}$, $Neg(f) = \{w\in S(n,r) : A_f(w)=N \}$, $\alpha_+(f) = |Pos(f)|$ and $\alpha_-(f) = |Neg(f)|$. It is obvious that $A_{\mathcal{S}_f}=A_f$.

In an attempt to answer to Q1), we give the following definition :

\begin{definition}
Let $\mathcal{H}$ be a family of maps of $\mathcal{BT}(n,r)$ and let $A\in \mathcal{H}$; we say that a boolean partial map $B\in \mathcal{B}(n,r)$ is a $\mathcal{H}$-core for $A$ if $A_{|W}=B$ (where $W=dom(B)$) and if $A'\in \mathcal{H}$ is such that $A'_{|W}=B$, then $A=A'$. We simply say that $B$ is a $\mathcal{H}$-core if it is a $\mathcal{H}$-core for some $A\in \mathcal{H}$.
\end{definition}

The following two results are very simple but they are fundamental in our strategy to approach the problem raised in Q1).


{\bf Positive local criterion (p.l.c.)} {\it Let $\mathcal{H}$ be a family of maps of $\mathcal{BT}(n,r)$ such that $\chi(W_+CTSyst(n,r)) \subseteq \mathcal{H}$ and  $\mathcal{H} \cap \chi(W_-CTSyst(n,r)) = \emptyset$, and let $A\in \mathcal{H}$.
Let $B$ denote a $\mathcal{H}$-core of $A$. Then, $\mathcal{S}_A$ is compatible if and only if $\mathcal{S}_B\in W_+CSyst(n,r)$ and, in this case, if $f\in WF_+(n,r)$ is a solution of $\mathcal{S}_B$, it is also a solution of $\mathcal{S}_A$.}
\begin{proof}
If $\mathcal{S}_A$ is compatible, then it must be necessarily $\mathcal{S}_A\in W_+CTSyst(n,r)$ because $\mathcal{H} \cap \chi(W_-CTSyst(n,r)) = \emptyset$ and $A=\chi(\mathcal{S}_A)$. Hence $\mathcal{S}_B\in W_+CSyst(n,r)$. On the other side, if $\mathcal{S}_B\in W_+CSyst(n,r)$, it has a solution $f\in WF_+(n,r)$. Then $\mathcal{S}_f \in W_+CTSyst(n,r)$ and hence, by hypothesis, $\chi(\mathcal{S}_f)\in \mathcal{H}$. It is easy to observe that $\chi(\mathcal{S}_f)=A_f$. Therefore $A_f\in \mathcal{H}$. If we denote by $W$ the domain of $B$, we have $({A_f})_{|W}= B$ since $f$ is a solution of $\mathcal{S}_B$; therefore $({A_f})_{|W}= A_{|W}$. Since $W$ is a $\mathcal{H}$-core of $A$, we have that $A=A_f$;
hence $\mathcal{S}_A$ is compatible and $f$ is one of its solution.
\end{proof}

{\bf Negative local criterion (n.l.c.)} {\it Let $\mathcal{H}$ be a family of maps of $\mathcal{BT}(n,r)$ such that $\chi(W_-CTSyst(n,r)) \subseteq \mathcal{H}$ and  $\mathcal{H} \cap \chi(W_+CTSyst(n,r)) = \emptyset$, and let $A\in \mathcal{H}$.
Let $B$ denote a $\mathcal{H}$-core of $A$. Then, $\mathcal{S}_A$ is compatible if and only if $\mathcal{S}_B\in W_-CSyst(n,r)$ and, in this case, if $f\in WF_-(n,r)$ is a solution of $\mathcal{S}_B$, it is also a solution of $\mathcal{S}_A$.}
\begin{proof}
Similar to that of p.l.c.
\end{proof}

The previous results gives us some ``local" criteria that are useful in two directions : ``from global to local" and ``from local to global".
In the direction ``from global to local", to decide if a map $A$ that we choose in a special family $\mathcal{H}$ of boolean total maps of $\mathcal{BT}(n,r)$ determines a $(n,r)$-compatible total system. In this case the previous criteria are useful if we know, for each given map $A\in \mathcal{H}$ an $\mathcal{H}$-core that is ``sufficiently" small. One of our principal results in this paper will be that of building some appropriate families $\mathcal{H}$ of boolean total maps that satisfies the previous local criteria and such that for each $A \in \mathcal{H}$ there  exists a unique $\mathcal{H}$-core of $A$ with minimal cardinality (we will call it the {\it fundamental} $\mathcal{H}$-core of $A$). We show also as this core is composed.

In the direction ``from local to global", we can ask if a given system $\mathcal{S}$ of $W_+CSyst(n,r)$ (or of $W_-CSyst(n,r)$) is equivalent to some $\mathcal{S}'\in W_+CTSyst(n,r)$ (or to some $\mathcal{S}'\in W_-CTSyst(n,r)$) and if $\mathcal{S}$ has a minimal cardinality between all the $(n,r)$-sub-systems having the same solutions of $\mathcal{S}'$.

Let us consider for example the following $(6,2)$-positively weighted system:

\begin{equation}\label{systemExample1}
  \left\{ \begin{array}{l}
                    x_2 \ge x_1 \ge 0 > y_1 \ge y_2 \ge y_3 \ge y_4 \\
                    x_1+x_2+y_1+y_2+y_3+y_4 \ge 0\\
                    x_1+y_2+y_3 < 0\\
                    \end{array}
  \right.
\end{equation}

Two different $(6,2)$-weight functions that are solutions of (\ref{systemExample1}) are the following:

$$
   \begin{array}{cccccccc}
   {} &  \tilde{2}  & \tilde{1}  & \overline{1} &  \overline{2} & \overline{3} & \overline{4} &\\
    f:  &      \downarrow & \downarrow & \downarrow &     \downarrow  & \downarrow & \downarrow     &\\
    {} &          3      &      1     &     -1     &      -1         &      -1    &      -1        &\\
  \end{array}
   \begin{array}{cccccccc}
   {} &  \tilde{2}  & \tilde{1}  & \overline{1} &  \overline{2} & \overline{3} & \overline{4} &\\
    g:  &      \downarrow & \downarrow & \downarrow &     \downarrow  & \downarrow & \downarrow     &\\
    {} &          4      &     0    &     -1     &      -1         &      -1    &      -1        &\\
  \end{array}
$$

It is easy to see that $\alpha_+(f)=40$ and $\alpha_+(g)=36$, therefore $Pos(f) \neq Pos(g)$.

On the other side, the following $(6,2)$-positively weighted system:

\begin{equation}\label{systemExample2}
  \left\{ \begin{array}{l}
                    x_2 \ge x_1 \ge 0 > y_1 \ge y_2 \ge y_3 \ge y_4 \\
                    x_1+x_2+y_1+y_2+y_3+y_4 \ge 0\\
                    x_1+y_1+y_2+y_3+y_4 \ge 0\\
                    \end{array}
  \right.
\end{equation}

has the property that for each two different $(6,2)$-weight functions $f$ and $g$ which are both solutions, it holds that $\alpha_+(f)=\alpha_+(g)=48$ and also $Pos(f)=Pos(g)$. The difference between the previous two systems is that (\ref{systemExample2}) is equivalent to a $(6,2)$-positively weighted total system, while (\ref{systemExample1}) is not.

\begin{definition}
We say that a system $\mathcal{S}\in CSyst(n,r)$ is generative if it is equivalent to some system $\mathcal{S'}\in CTSyst(n,r)$; in this case, we also say that $\mathcal{S'}$ is generated by $\mathcal{S}$, or that $\mathcal{S}$ generates $\mathcal{S'}$.
\end{definition}

Then, in the direction ``from local to global", we will see that, for a convenient family $\mathcal{H} \subseteq \mathcal{BT}(n,r)$, the boolean map determined by the system (\ref{systemExample2}) defines a $\mathcal{H}$-core and hence by p.l.c. this system is generative.

The reasons we have explained led us to search special families $\mathcal{H}$ of boolean total maps on $S(n,r)$ that satisfy the previous local criteria as well to study their $\mathcal{H}$-cores and, between them, to determine those having a minimal number of elements.
Essentially the boolean total maps that we are going to study here represent the equivalence classes of total systems: total systems can have many solutions but a unique boolean total map associated to them.\\
The proofs of our principal results does not require all the properties of the lattice $S(n,r)$, but only those of strongly involution poset. Therefore the results that we prove in the next sections hold for each finite strongly involution poset.

\section{Definitions, notations and some general results.}\label{preliminari}

Let $(X, \le)$ be a poset. If $Z \subseteq X$, we will set $\downarrow Z = \{ x \in X : \exists \,\,\, z \in Z \,\,\, \textrm{such that} \,\,\, z \ge x \} $,
$\uparrow Z = \{ x \in X : \exists \,\,\, z \in Z \,\,\, \textrm{such that} \,\,\, z \le x \}$. In particular, if $z \in X,$ we will set $\downarrow z =\downarrow \{z\} =\{ x \in X : z \ge x \}$, $\uparrow z =\uparrow \{z\} =\{ x \in X : z \le x \}$.
\begin{definition}
$J1)$ $Z$ is called a down-set of $X$ if for each $z \in Z$ and $x \in X$ with $z \ge x,$ then $x \in Z.$\\
$J2)$ $Z$ is called an up-set of $X$ if for each $z \in Z$ and $x\in X$ with $z \ge x,$ then $x \in Z.$
\end{definition}
$\downarrow Z$ is the smallest down-set of $X$ which contains $Z$ and $Z$ is a down-set in $X$ if and only if $Z=\downarrow Z.$\\
Similarly $\uparrow Z$ is the smallest up-set of $X$ which contains $Z$ and $Z$ is an up-set in $X$ if and only if $Z=\uparrow Z.$\\
Denote with $\bf{2}$ the boolean lattice composed of a chain with 2 elements that we will denote with $N$ (the minimal element) and $P$ (the maximal element). The set of all the partial maps from $X$ to $\bf{2},$ here denoted by $(X \rightsquigarrow \bf{2})$, is a poset with the following order: \\
if $(A, dom (A)), (B, dom (B)) \in (X \rightsquigarrow \bf{2}),$
\begin{equation} \label{defOrdXto2}
(A, dom (A))\trianglelefteq (B, dom (B)) \Longleftrightarrow dom (A) \subseteq dom (B) \,\,\, , B_{|dom (A)} =A.
\end{equation}

A {\it boolean partial map} (BPM) on $X$ is an element $(A, dom (A))$ of $(X \rightsquigarrow \bf{2})$, (that in the following we will denote only with $A$). If $dom (A)=X$, we will say that $A$ is a {\it boolean total map} (BTM) on $X.$

\begin{definition}
We say that a boolean partial map $A$ on $X$ is up-positive if $A^{-1}(P)$ is an up-set of $X$;
we say that it is down-negative if $A^{-1}(N)$ is a down-set of $X$.
\end{definition}
\begin{definition}
If $X$ and $Y$ are two arbitrary posets and if $A:X \to Y$ we will say that $A$ is order-preserving (OP) if for every $x_1,x_2 \in X$ such that $x_1 \le x_2$ then $A(x_1)\le A(x_2)$ in $Y.$ If $X$ is an arbitrary poset, we denote with $\mathcal{OP}(X,{\bf 2})$ the family of all op-BTM's on $X$.
\end{definition}
Given a boolean partial map $A$ on $X$, a minimal element in $A^{-1}(P)$ is called {\it minimal positive} of $A$; a maximal element in $A^{-1}(N)$
is called {\it maximal negative} of $A.$
\begin{definition}
If $Z \subset X$ we denote with $Min(Z)$ the set of minimal elements of $Z$ and with $Max(Z)$ the set of maximal elements of $Z.$
\end{definition}
If $A$ is a boolean partial map on $X$ and if $Z$ is a subset of $X,$ we set:
$$Z_{P}^A=A^{-1}(P) \cap Z = \{ x \in Z \cap dom(A) : A(x)=P \},$$
$$Z_{N}^A=A^{-1}(N) \cap Z =\{ x \in Z \cap dom(A) : A(x)=N \}.$$

With the symbol $\cup^d$ we denote the disjoint union between two sets.


The following proposition shows that the concepts of up-positivity, down-negativity and of order-preserving are equivalent for boolean total maps.

\begin{proposition}\label{equivopup}
Let $X$ be an arbitrary poset and $A$ a $BTM$ on $X.$ Then the following are equivalent:

i) $A$ is order-preserving (op);

ii) $A$ is up-positive (up);

iii) $A$ is down-negative (dn).

\end{proposition}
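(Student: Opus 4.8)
The plan is to exploit the fact that ${\bf 2}$ is the two-element chain $N < P$ and that, since $A$ is \emph{total}, its two fibres partition the whole poset: $X = A^{-1}(N)\, \cup^d\, A^{-1}(P)$. This complementarity is precisely what forces the three conditions to coincide, and it is the only place where totality is used in an essential way. Rather than a linear cyclic chain, I would prove $i)\Rightarrow ii)$, then the complementation equivalence $ii)\Leftrightarrow iii)$, and finally close the loop with $ii)\Rightarrow i)$.

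First I would prove $i)\Rightarrow ii)$. Assuming $A$ is order-preserving, take $x \in A^{-1}(P)$ and any $y \ge x$. Then $A(y) \ge A(x) = P$ in ${\bf 2}$; since $P$ is the top element of ${\bf 2}$, this forces $A(y) = P$, i.e. $y \in A^{-1}(P)$. Hence $A^{-1}(P)$ is an up-set, which is exactly up-positivity.

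Next, $ii)\Leftrightarrow iii)$ is the formal heart of the argument and uses no order-preservation at all. The underlying general fact is that, in any poset, a subset $Z$ is an up-set if and only if its complement $X\setminus Z$ is a down-set (this is immediate from the contrapositive of the defining implication $J2$). Because $A$ is total, $A^{-1}(N) = X \setminus A^{-1}(P)$, so $A^{-1}(P)$ being an up-set is equivalent to $A^{-1}(N)$ being a down-set, that is, up-positivity is equivalent to down-negativity.

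Finally I would close the loop with $ii)\Rightarrow i)$. Assume $A^{-1}(P)$ is an up-set and let $x_1 \le x_2$. If $A(x_1) = N$, there is nothing to check, since $N$ is the bottom of ${\bf 2}$ and $A(x_1) = N \le A(x_2)$ automatically. If $A(x_1) = P$, then $x_1 \in A^{-1}(P)$, and $x_2 \ge x_1$ together with the up-set property gives $x_2 \in A^{-1}(P)$, whence $A(x_2) = P \ge A(x_1)$. In either case $A(x_1)\le A(x_2)$, so $A$ is order-preserving, completing the cycle $i)\Leftrightarrow ii)\Leftrightarrow iii)$. I expect no genuine obstacle here: the statement is essentially the observation that a ${\bf 2}$-valued map is monotone precisely when its $P$-fibre is upward closed. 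The only point requiring care is to invoke totality when passing between the two complementary fibres in $ii)\Leftrightarrow iii)$; for a merely partial map the complement of $A^{-1}(P)$ need not equal $A^{-1}(N)$, and the equivalence would break down.
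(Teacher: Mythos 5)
Your proof is correct, but it organizes the equivalence differently from the paper. The paper proves $i)\Leftrightarrow ii)$ directly (with $ii)\Rightarrow i)$ done by contradiction: if $A(x_1)\not\le A(x_2)$ then, ${\bf 2}$ being a chain, $A(x_1)=P$ and $A(x_2)=N$, contradicting up-positivity) and then disposes of $iii)$ by saying the equivalence $i)\Leftrightarrow iii)$ ``follows likewise,'' i.e.\ by a second, symmetric direct argument; it never links $ii)$ and $iii)$ to each other. You instead prove $ii)\Leftrightarrow iii)$ through the general order-theoretic fact that a subset $Z$ of a poset is an up-set if and only if $X\setminus Z$ is a down-set, combined with the identity $A^{-1}(N)=X\setminus A^{-1}(P)$, which holds precisely because $A$ is total. (Your reading of $J2)$ is the intended one: as printed in the paper, $J2)$ repeats the text of $J1)$ verbatim, an evident typo for $z\le x$.) Your decomposition buys something the paper's symmetric repetition does not: it isolates the exact role of totality and makes visible that $ii)\Leftrightarrow iii)$ uses no monotonicity whatsoever, while for a merely partial map the equivalence genuinely fails --- which is why, later in the paper, the definition of a $+$WBPM must impose up-positivity and down-negativity as separate conditions on boolean \emph{partial} maps rather than deducing one from the other. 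The paper's approach, in turn, is marginally more self-contained, needing no auxiliary lemma about complements of up-sets. Your $ii)\Rightarrow i)$ by direct case split on $A(x_1)\in\{N,P\}$ is the contrapositive-free version of the paper's argument and is equally valid.
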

\begin{proof}
We prove that $i)$ and $ii)$ are equivalent. The equivalence of $i)$ and $iii)$ follows likewise.\\
$i) \Rightarrow ii):$ suppose that $x_1,x_2 \in X$ and $A(x_1)=P.$ Since $A$ is order-preserving, we have that $A(x_1) \le A(x_2)$ and in ${\bf 2}$ this implies that $A(x_2)=P.$ \\
$ii) \Rightarrow i):$ let $x_1,x_2 \in X$ such that $x_1 \le x_2$ and suppose, on the contrary, that $A(x_1) \not\le A(x_2).$ Since ${\bf 2}$ is totally ordered, we have that $A(x_1) >A(x_2);$ hence $A(x_1)=P$ and $A(x_2)=N.$ Since $x_1 \le x_2$ and $A(x_1)=P,$ and since $A$ is up-positive it follows that $A(x_2)=P,$ which is a contradiction because $A(x_2)=N.$
\end{proof}

Our next question is the following : let $X$ be a poset and let $W \subset X.$ Given a function $\varphi: W \to {\bf 2},$ which properties the couple $(W,\varphi)$ has to have in order that there exists a unique particular type of BTM $A$ on $X$ which extends $\varphi$ ?

\begin{definition}
Let $X$ be an arbitrary poset and let $\mathcal{H}$ a family of BTM's on $X$. A couple $(W,\varphi)$ is a $\mathcal{H}$-core on $X$ if:
\begin{itemize}
\item[N1)] $\varphi$ is a BPM on $X$ such that $dom(\varphi)=W;$\\
\item[N2)] there exists a unique $A \in \mathcal{H}$ such that $A_{|W}=\varphi.$
\end{itemize}
\end{definition}

If $(W, \varphi)$ is a $\mathcal{H}$-core on $X$, the unique map $A \in \mathcal{H}$ (in N2)) which extends $\varphi$ is called the $\mathcal{H}$- {\it map spanned} by the core $(W, \varphi)$. We also say that $(W, \varphi)$ {\it spans} $A$ and sometime we write $A = A_{W,\varphi}$ to mean that $A$ is spanned by $(W, \varphi)$. On the other side, if $A \in \mathcal{H}$ is given, we say that a subset $W$ of $X$ is a $\mathcal{H}$-{\it core for} $A$ if the couple $(W, \varphi)$, with $\varphi = A_{|W}$, is a $\mathcal{H}$-core on $X$ (in this case $A$ is obviously the unique map in $\mathcal{H}$ spanned by $(W, \varphi)$).
\begin{definition}
We say that $W$ is a $\mathcal{H}$-fundamental core for $A$ if it is a $\mathcal{H}$-core for $A$ and if, for each $\mathcal{H}$-core $V$ for $A,$ $W \subseteq V$.
\end{definition}
Obviously, if there exists a $\mathcal{H}$-fundamental core of $A$, then it is unique, therefore we can speak of {\it the} $\mathcal{H}$-fundamental core for $A$. If the family $\mathcal{H}$ is clear from the context, we say simply core instead of $\mathcal{H}$-core.

Let $Core_\mathcal{H}(X)$ be the family of all the $\mathcal{H}$-cores $(W,\varphi)$ on $X$. We consider the function $f:Core_\mathcal{H}(X) \to \mathcal{H}$ such that
$$f((W, \varphi))=A_{W,\varphi}.$$
Naturally $f$ is surjective, since for all $A \in \mathcal{H}$ we have $A=A_{X,A}=f((X,A))$ and the core for $A$ is the obvious one $(W,\varphi)=(X,A).$ \\
We define on $Core_\mathcal{H}(X)$ the following relation
$$(W_1,\varphi_1) \sim (W_2, \varphi_2) \Leftrightarrow A_{W_1,\varphi_1}=A_{W_2,\varphi_2}.$$
Then $\sim$ is an equivalence relation and by the universal property of the quotient there exists a unique injective map $f^*$ from $Core_\mathcal{H}(X)/\sim$ into $\mathcal{H}$ induced by $f$. Since $f$ is surjective, it follows that $f^*$ is bijective.

In this paper we study the cores for two particular families of BTM's and we determine explicitly the set $Core_\mathcal{H}(X)/\sim$ for these families of boolean maps.
The two families of BTM's that we will examine are defined on a particular class of posets, which are the involution posets.

\begin{definition}
An involution poset (IP) is a poset $(X, \le)$  with a unary mapping $c:X \to X$ such that:
\begin{itemize}
\item[$I1)$] $c(c(x))=x,$ for all $x \in X$; \
\item[$I2)$] if $x,y \in X$ and if $x \le y,$ then $c(y)\le c(x)$.\
\end{itemize}
\end{definition}
If $x \in X,$ we will write $c(x)=x^c.$ \\

In this paper we consider involution posets $(X, \le, c)$ having the following further property :
\begin{itemize}
\item[$I3)$] $x^c \neq x$ for all $x\in X$ (if $|X| \ge 2$).\
\end{itemize}
\begin{definition}
We call strongly involution poset (SIP) an involution poset \\ $(X, \le, c)$ which satisfies $I3)$.
\end{definition}
Let us observe that if $X$ is an involution poset, by $I1)$ follows that $c$ is bijective and by $I1)$ and $I2)$ it holds that if $x,y \in X$ are such that $x<y,$ then $y^c < x^c.$

\medskip
If $(X, \le, c)$ is an involution poset and if $Z\subseteq X$, we will set $Z^c=\{ z^c : z \in Z \}.$ \\

\begin{definition}
If X is a SIP, will say that a boolean partial map $A$ on $X$ is :
\begin{itemize}
\item[i)] complemented positive if $A^{-1}(N)^c \subseteq A^{-1}(P)$; \\
\item[ii)] complemented negative if $A^{-1}(P)^c \subseteq A^{-1}(N)$.
\end{itemize}
\end{definition}

\begin{definition}
If $X$ is a SIP, a BPM $A$ on $X$ is called positively weighted boolean partial map (briefly +WBPM) if it is up-positive, down-negative and complemented-positive; in particular, if $A$ is also total on $X$, it is called positively weighted boolean total map (briefly +WBTM).
\end{definition}
Similar definitions holds when complemented-positive is replaced with complemented-negative, +WBPM with -WBPM and +WBTM with -WBPM. A WBPM is a +WBPM or a -WBPM, a WBTM is a +WBTM or a -WBTM.

If $X$ is a SIP, we denote by $\mathcal{W}_+(X,{\bf 2})$ the family of all the +WBTM's on $X$ and by $\mathcal{W}_-(X,{\bf 2})$ that of all the -WBTM's on $X$. Then $\mathcal{W}_+(X,{\bf 2})$ and $\mathcal{W}_-(X,{\bf 2})$ are the two families that we will study in this paper. Obviously, if $X$ is a SIP, by virtue of Proposition \ref{equivopup}, it follows that $\mathcal{W}_+(X,{\bf 2})$ [$\mathcal{W}_-(X,{\bf 2})$] is the sub-family of all the maps in $\mathcal{OP}(X,{\bf 2})$ which are also complemented positive [negative].

By Proposition \ref{equivopup} it follows that if $A$ is a BTM on $X$, then $A$ is a +WBTM [-WBTM] if and only if $A$ is up-positive and complemented positive [negative].
The following proposition shows that each boolean lattice is also a SIP and that a boolean lattices morphism is a +WBTM and a -WBTM.

\begin{proposition}
Let $(X,\wedge,\vee,0,1,')$ be a boolean lattice, then $X$ is a SIP. Moreover, if  $A:X\to {\bf 2}$ is a boolean lattices morphism, then $A$ is a +WBTM and a -WBTM.
\end{proposition}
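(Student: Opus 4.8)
The plan is to treat the two assertions separately. First I would exhibit the boolean complementation $'$ as the involution $c$ that turns $X$ into a SIP; then, given a morphism $A$, I would invoke Proposition \ref{equivopup} together with the fact that $A$ commutes with complementation.

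For the SIP part, set $c(x)=x'$. Property $I1)$ is immediate from the involutive law $x''=x$ of boolean complementation. Property $I2)$ is exactly the order-reversing behaviour of $'$: if $x\le y$ then $x\wedge y=x$, so applying $'$ and De Morgan gives $x'\vee y'=x'$, i.e. $y'\le x'$. For $I3)$ I would argue by contradiction: if $x=x'$ for some $x$, then $x=x\wedge x'=0$ and at the same time $x=x\vee x'=1$, forcing $0=1$ and hence $|X|=1$; thus when $|X|\ge 2$ we have $x^c\neq x$ for all $x$, which is precisely $I3)$. This establishes that $(X,\le,c)$ is a SIP.

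For the morphism part, recall that in ${\bf 2}$ the complement satisfies $N'=P$ and $P'=N$. A boolean lattices morphism $A$ preserves $0$, $1$, $\wedge$ and $\vee$, and I would note that it then automatically respects complements: from $A(x)\wedge A(x')=A(0)=N$ and $A(x)\vee A(x')=A(1)=P$, together with the uniqueness of complements in ${\bf 2}$, we get $A(x')=A(x)'$. By Proposition \ref{equivopup} it suffices to check that $A$ is order-preserving and both complemented-positive and complemented-negative. Order-preservation is immediate: if $x\le y$ then $x\wedge y=x$, so $A(x)=A(x)\wedge A(y)$, i.e. $A(x)\le A(y)$.

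Finally, for the two complementation conditions, take $z\in A^{-1}(N)^c$, say $z=w^c=w'$ with $A(w)=N$; then $A(z)=A(w')=A(w)'=N'=P$, so $z\in A^{-1}(P)$, which gives $A^{-1}(N)^c\subseteq A^{-1}(P)$ (complemented-positive). Symmetrically, if $z=w'$ with $A(w)=P$, then $A(z)=A(w)'=P'=N$, so $A^{-1}(P)^c\subseteq A^{-1}(N)$ (complemented-negative). Hence $A$ is simultaneously a +WBTM and a -WBTM. I expect essentially no obstacle here: the only points that warrant a moment's care are verifying that the morphism respects complements and the edge case $|X|\ge 2$ in $I3)$; everything else is a direct unwinding of the definitions.
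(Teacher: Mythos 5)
Your proof is correct and follows essentially the same route as the paper: take $c(x)=x'$, verify $I1)$--$I3)$, and check order-preservation plus the two complementation conditions via $A(x')=A(x)'$ in ${\bf 2}$. The only difference is that you spell out details the paper dismisses as well-known (the verification of $I1)$--$I3)$, order-preservation from $\wedge$-preservation, and the derivation that a $\{0,1,\wedge,\vee\}$-morphism automatically preserves complements, whereas the paper takes $A(a')=(A(a))'$ as part of the definition of morphism), all of which is sound.
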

\begin{proof}
Let $c:X \to X$ be such that $c(x)=x'$ where $x'$ is the complement of $x$ in $X,$ i.e. the unique element of $X$ such that $x \wedge x'=0$ and $x \vee x'=1.$ By the well-known properties of the function $x \mapsto x',$ it follows that $c$ satisfies the properties $I1)\,\,I2)\,\,I3)$.


By definition of morphism of boolean lattices, $A$ is such that
$$A(a\vee b)=A(a)\vee A(b), \,\,\,\,\,\, A(a\wedge b)=A(a)\wedge A(b),$$
$$A(0)=0, \,\,\,\,\, A(1)=1, \,\,\,\,\,\, A(a')=(A(a))'.$$
It is well-known by the general theory that $A$ is order-preserving (hence also up-positive and down-negative). Finally, if $x \in X$ is such that
$A(x)=N,$ then $A(x^c)=A(x')=A(x)'=N'=P$ because in ${\bf 2}$ the complement of $N$ is $P.$ Hence $A$ is complemented positive. Similarly we see that $A$ is also complemented negative.
\end{proof}

\begin{proposition}\label{minimax}
Let $X$ be a SIP and $A$ a +WBPM on $X$, then :

i) if $w$ is a minimal positive of $A$ such that $A(w^c)=N$, it follows that $w^c$ is a maximal negative of $A$;

ii) if $x,\,\,\, x^c \in dom(A)$ and $x^c \le x,$ then $A(x)=P$.

If $A$ is a -WBPM on $X$, then :

i') if $w$ is a maximal negative of $A$ such that $A(w^c)=P$, it follows that $w^c$ is a minimal positive of $A$;

ii') if $x,\, x^c \in dom(A)$ and $x^c \le x,$ then $A(x^c)=N$.
\end{proposition}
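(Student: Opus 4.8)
The plan is to unpack each definition directly and use the defining inclusions of the three structural hypotheses on a +WBPM ($A$ up-positive, down-negative, complemented-positive), together with the order-reversing property $I2)$ of the involution $c$. I will prove the two assertions $i)$ and $ii)$ for a +WBPM; the primed assertions for a -WBPM follow by the evident symmetry of the definitions (swapping the roles of $P$ and $N$, of up-sets and down-sets, and of complemented-positive and complemented-negative), so I would dispatch them with the phrase \emph{``the proof is entirely analogous.''}

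For part $i)$, suppose $w$ is a minimal positive of $A$ with $A(w^c)=N$. I must show $w^c$ is a maximal negative, i.e.\ that $w^c$ is a maximal element of $A^{-1}(N)$. The idea is to argue by contradiction: if $w^c$ were not maximal in $A^{-1}(N)$, there would exist some $z \in A^{-1}(N)$ with $w^c < z$. Applying $c$ and using $I2)$ (together with the strict-inequality remark noted after the definition of SIP, namely $x<y \Rightarrow y^c < x^c$), I get $z^c < (w^c)^c = w$. Now I would invoke complemented-positivity: since $A(z)=N$, we have $z \in A^{-1}(N)$, hence $z^c \in A^{-1}(N)^c \subseteq A^{-1}(P)$, so $A(z^c)=P$. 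Thus $z^c$ is a positive element strictly below $w$, contradicting the minimality of $w$ in $A^{-1}(P)$. Hence no such $z$ exists and $w^c$ is maximal negative.

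For part $ii)$, assume $x, x^c \in dom(A)$ and $x^c \le x$. I want $A(x)=P$. Suppose instead $A(x)=N$. Then $x \in A^{-1}(N)$, so by complemented-positivity $x^c \in A^{-1}(N)^c \subseteq A^{-1}(P)$, giving $A(x^c)=P$. But $x^c \le x$ together with up-positivity (Proposition \ref{equivopup} lets me read up-positivity as order-preserving) forces $A(x^c)\le A(x)$ in $\mathbf{2}$, i.e.\ $P \le N$, which is false in the chain $N < P$. This contradiction shows $A(x)=P$. (Alternatively one can run the same contradiction through down-negativity, which is interchangeable with up-positivity by Proposition \ref{equivopup}.)

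The arguments are short and the only genuine care needed is bookkeeping: making sure each strict or non-strict inequality is transported correctly through $c$ via $I2)$ and its strict refinement, and choosing the right one of the three equivalent hypotheses (up-positive / down-negative / order-preserving) at each step. The main obstacle, if any, is purely notational rather than conceptual---namely keeping the direction of the complemented-positive inclusion $A^{-1}(N)^c \subseteq A^{-1}(P)$ straight, since reversing it would silently break both parts.
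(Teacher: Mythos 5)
Your proposal is correct and follows essentially the same route as the paper: for $i)$ a contradiction argument transporting the strict inequality through the order-reversing involution and then applying complemented-positivity to contradict the minimality of $w$, and for $ii)$ complemented-positivity combined with up-positivity, with the primed cases dispatched by symmetry, exactly as in the paper. One small caution: Proposition \ref{equivopup} is stated only for boolean \emph{total} maps, whereas $A$ here is a partial map, so rather than invoking it to read up-positivity as order-preservation you should appeal directly to the definition of up-positivity for a BPM ($A^{-1}(P)$ is an up-set of $X$), which is what the paper does and what your argument in substance already uses.
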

\begin{proof}
$i)$ Suppose by contradiction that $w^c$ isn't a maximal negative of $A.$ Then there exists an element $w' \in A^{-1} (N)$ such that $w' >w^c.$ Since $A$ is complemented positive, we have that $A^{-1}(N)^c \subseteq A^{-1} (P)$ and hence $(w')^c \in A^{-1}(P).$ Furthermore, since $w' >w^c,$ we also have that $w=(w^c)^c > (w')^c$ and this is a contradiction by the minimality of $w$ in $A^{-1}(P).$

$ii)$ Suppose by contradiction that $A(x)=N.$ Since $A$ is complemented positive, we have that $x^c \in dom(A)$ and $A(x^c)=P.$ Since $x^c \le x$ and $A$ is up-positive, we have that $A(x)=P$ and this is a contradiction.

The proof of $i')$ and $ii')$ is similar.
\end{proof}

By Proposition \ref{minimax}-ii) and Proposition \ref{minimax}-ii'),
\begin{definition}
The elements $w \in X$ such that $w^c \sqsubseteq w$ are called \textit{complemented}.
\end{definition}

\section{The $\mathcal{H}$-cores when  $\mathcal{H}=\mathcal{W}_+(X,{\bf 2})$ or $\mathcal{H}=\mathcal{W}_-(X,{\bf 2})$}
In this section we assume that $X$ is a finite strongly involution poset and we determine the  $\mathcal{W}_+(X,{\bf 2})$- fundamental core of an arbitrary +WBTM of $\mathcal{W}_+(X,{\bf 2})$ and the $\mathcal{W}_-(X,{\bf 2})$- fundamental core of an arbitrary -WBTM of $\mathcal{W}_-(X,{\bf 2})$.

\begin{proposition}\label{negincl}
i) Let $A\in \mathcal{W}_+(X,{\bf 2})$ and $W$ a core for $A,$ then
$$X^A_N \subseteq \downarrow W_N^A.$$

ii) Let $A\in \mathcal{W}_-(X,{\bf 2})$ and $W$ a core for $A,$ then
$$X^A_P \subseteq \uparrow W_P^A.$$
\end{proposition}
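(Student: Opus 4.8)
The plan is to argue by contradiction, exploiting the uniqueness built into the notion of core (condition N2) in the definition of $\mathcal{H}$-core, here with $\mathcal{H}=\mathcal{W}_+(X,{\bf 2})$). I treat i) in detail; ii) follows by the dual construction. So suppose $X^A_N \not\subseteq \downarrow W^A_N$, and pick $x$ with $A(x)=N$ but $x \notin \downarrow W^A_N$; concretely this means that no $w \in W$ with $A(w)=N$ satisfies $x \le w$. The goal is to manufacture a second map $A' \in \mathcal{W}_+(X,{\bf 2})$ that agrees with $A$ on $W$ yet differs from $A$ globally, contradicting the uniqueness of the extension that characterizes a core.

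First I would define $A'$ by enlarging the positive region with the principal up-set of $x$: set $A'^{-1}(P) = A^{-1}(P) \cup \uparrow x$ and let $A'^{-1}(N)$ be its complement in $X$, so that $A'$ is total. Then I would verify $A' \in \mathcal{W}_+(X,{\bf 2})$. Up-positivity is immediate, since $A'^{-1}(P)$ is the union of two up-sets ($A^{-1}(P)$ is an up-set because $A$ is up-positive, and $\uparrow x$ is an up-set by construction), hence an up-set; by Proposition \ref{equivopup} this already yields order-preservation and down-negativity. For complemented positivity I would take any $z$ with $A'(z)=N$: then $z \notin A^{-1}(P)$, so $A(z)=N$, and since $A$ is complemented positive, $z^c \in A^{-1}(P) \subseteq A'^{-1}(P)$; thus $A'^{-1}(N)^c \subseteq A'^{-1}(P)$.

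Next I would check the two remaining requirements. Since $x \in \uparrow x \subseteq A'^{-1}(P)$ we get $A'(x)=P \neq N = A(x)$, so $A' \neq A$. The set on which $A'$ and $A$ disagree is exactly $\{\,y : x \le y \text{ and } A(y)=N\,\}$. If some such $y$ lay in $W$, then $y \in W^A_N$ with $x \le y$ would force $x \in \downarrow W^A_N$, contrary to the choice of $x$; hence no element of the disagreement set lies in $W$, and therefore $A'_{|W} = A_{|W}$. Now $A$ and $A'$ are two distinct members of $\mathcal{W}_+(X,{\bf 2})$ restricting to the same BPM on $W$, contradicting N2). This contradiction establishes i). For ii), the symmetric argument replaces $\uparrow x$ by $\downarrow x$, enlarges the negative region via $A'^{-1}(N) = A^{-1}(N) \cup \downarrow x$, and invokes complemented negativity in place of complemented positivity, the assumption $x \notin \uparrow W^A_P$ guaranteeing agreement on $W$.

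I expect the only genuinely delicate step to be confirming that the perturbed map stays in the family $\mathcal{W}_+(X,{\bf 2})$, and in particular that enlarging the positive region by an up-set preserves complemented positivity. The point is that the construction only \emph{adds} points to $A^{-1}(P)$ and never removes any, so the complemented-positive inequality inherited from $A$ survives verbatim; the assumption $x \notin \downarrow W^A_N$ is used precisely, and only, to localize the perturbation away from $W$.
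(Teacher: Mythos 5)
Your proof is correct, and it takes a genuinely different route from the paper's. The paper also argues by contradiction via a perturbation that agrees with $A$ on $W$, but it perturbs $A$ at a \emph{single} point: it forms $\mathcal{F}=X^A_N\setminus \downarrow W^A_N$, extracts a \emph{maximal} element $z$ of $\mathcal{F}$ (using finiteness of $X$), and flips only $z$ to $P$. The price of that minimal flip is that up-positivity of $A'$ is no longer automatic: in the delicate case $z=w_1<w_2$ with $A'(w_2)=N$, the paper must show $w_2\in\mathcal{F}$ and invoke the maximality of $z$, and a separate three-case check is needed for complemented positivity. You instead flip the whole cone above an arbitrary offending point $x$, setting $A'^{-1}(P)=A^{-1}(P)\cup\uparrow x$; then up-positivity is immediate (a union of up-sets is an up-set, and Proposition \ref{equivopup} gives down-negativity for free), and complemented positivity survives verbatim because the construction only enlarges the positive region while shrinking the negative one, so $A'^{-1}(N)^c\subseteq A^{-1}(N)^c\subseteq A^{-1}(P)\subseteq A'^{-1}(P)$. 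Your localization argument --- that the disagreement set $\{y: x\le y,\ A(y)=N\}$ misses $W$, since any such $y\in W$ would put $x\in\downarrow W^A_N$ --- is exactly right, and it also covers the case $x\in W$ automatically. What your approach buys: it needs no maximal-element extraction and hence no finiteness hypothesis, so it proves the statement for arbitrary (possibly infinite) SIPs, with shorter verifications; what the paper's buys is a perturbation supported at exactly one point, a style of argument it reuses (dually, with a minimal element) in Proposition \ref{posincl}. Your dual treatment of ii) via $A'^{-1}(N)=A^{-1}(N)\cup\downarrow x$ is likewise sound.
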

\begin{proof}
$i)$ Let $w \in X^A_N$ be fixed. In order to prove that $w \in \downarrow W^A_N$ we need to exhibit an element $\overline{w} \in W^A_N$ such that $\overline{w} \ge w.$ If $w \in W,$ then $w\in X^A_N \cap W=W^A_N,$ and in this case the assertion follows taking $\overline{w}=w.$ \\
Suppose that $w \not\in W$ and that, by contradiction, $w \not\in \downarrow W^A_N.$ This implies that the set $\mathcal{F}=\{ u \in X^A_N: u \not\in W, u\not\in \downarrow W^A_N \}=X^A_N \setminus \downarrow W^A_N$ is not empty, since $w \in \mathcal{F}.$ Let $z$ be a maximal element of $\mathcal{F}.$ We define a function $A':X \to {\bf 2}$ such that
$$
A'(u)= \left\{ \begin{array}{lll}
               A(u) & \textrm{if} & u \neq z \\
               P  & \textrm{if} & u=z
               \end{array} \right.
$$
We prove that $A'$ is a +WBTM. \\
{\it Step 1} We prove that $A'$ is up-positive, i.e. that $X^{A'}_P$ is an up-set in $X.$
Let $w_1,w_2 \in X$ be such that $w_1 \le w_2,$ and suppose that $A'(w_1)=P.$ We want to prove that $A'(w_2)=P.$
\begin{itemize}
\item[$i_1)$] Suppose that $w_1 \neq z$ and $w_2 \neq z.$ Then, since $A'(w_1)=P$ and $w_1 \neq z,$ by the definition of $A',$ it follows that $A(w_1)=P.$ Since $A$ is a +WBTM and $w_1 \le w_2$ it follows that $A(w_2)=P$ and since $w_2 \neq z,$ always by the definition of $A',$ it follows that $A'(w_2)=A(w_2)=P.$\\
\item[$i_2)$] Suppose that $w_1 < w_2=z.$ In this case we have that $A'(w_2)=A'(z)=P$ by definition of $A',$ and hence the assertion follows.\\
\item[$i_3)$] Suppose that $w_1=z <w_2.$ Suppose, by contradiction, that $A'(w_2)=N.$ Since $w_2 \neq z,$ by definition of $A'$ we will have that $A'(w_2)=A(w_2)=N.$ Observe that $w_2 \not\in \downarrow W^A_N.$ Indeed, if $w_2 \in \downarrow W^A_N,$ then there exists an element $\overline{w} \in W^A_N$ such that $w_2 \le \overline{w}$ and hence we will have that $z<w_2 \le \overline{w},$ from which it follows that $z \in \downarrow W^A_N,$ against the hypothesis that $z \in \mathcal{F}.$ Hence we have that $A(w_2)=N$ and $w_2 \not\in \downarrow W^A_N$ i.e. $w_2 \in \mathcal{F},$ but this is in contradiction with the maximality of $z$ in $\mathcal{F}.$
\end{itemize}
Hence $A'$ is up-positive.\\
{\it Step 2} We prove that $A'$ is complemented positive, i.e. that $(X^{A'}_N)^c \subseteq X^{A'}_P.$ Let $w \in X$ be fixed and such that $A'(w)=N.$ We prove that $A'(w^c)=P.$
\begin{itemize}
\item[$j_1)$] Suppose that $w \neq z$ and $w \neq z^c.$ In this case, since $w\neq z,$ by definition of $A',$ it follows that $A(w)=A'(w)=N.$
Since $A$ is complemented positive, we have that $A(w^c)=P.$ On the other hand, since $w \neq z^c,$ we have that $w^c \neq z$ and hence, by definition of $A',$ we have that $A'(w^c)=A(w^c)=P.$ \\
\item[$j_2)$] Suppose that $w=z$ and hence that $w^c=z^c.$ In this case, since $A'(w)=A'(z)=P,$ the hypothesis $A'(w)=N$ is empty and hence there is nothing to prove. \\
\item[$j_3)$] Suppose $w \neq z$ and $w=z^c.$ In this case we have that $z^c=w\neq z$ and hence, by definition of $A',$ it follows that $A'(z^c)=A(z^c).$ Since $z \in \mathcal{F},$ we have that $A(z)=N$ and hence, since $A$ is complemented positive, we will have that $A(z^c)=P$ and that $A'(w)=A'(z^c)=A(z^c)=P.$ This means that the hypothesis $A'(w)=N$ is empty, if $w \neq z$ and $w=z^c.$ This prove that $A'$ is complemented positive.
\end{itemize}
Hence $A'$ is a +WBTM on $X$ which coincides with $A$ on $X$ everywhere except on $z$ where we have $A'(z)=P$ and $A(z)=N.$ Since $z \not\in W$ (because $z \in \mathcal{F}$) it holds that $A'_{|W}=A_{|W}$ but $A\neq A'$ on $X.$ This contradicts the hypothesis that $W$ is a core for $A.$

$ii)$ Similar arguments apply.
\end{proof}
\begin{corollary}\label{negug}
i) If $A\in \mathcal{W}_+(X,{\bf 2})$ and $W$ is a core for $A,$ then
$$X^A_N=\downarrow W^A_N.$$

ii) If $A\in \mathcal{W}_-(X,{\bf 2})$ and $W$ is a core for $A,$ then
$$X^A_P=\uparrow W^A_P.$$

\end{corollary}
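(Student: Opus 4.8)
The plan is to obtain each equality by pairing Proposition \ref{negincl} with one of the defining monotonicity properties of the maps in $\mathcal{W}_+(X,{\bf 2})$ and $\mathcal{W}_-(X,{\bf 2})$. Proposition \ref{negincl} already supplies one inclusion in each case, so only the reverse inclusion remains, and that inclusion will come for free from the fact that $X^A_N$ is a down-set (resp. $X^A_P$ is an up-set).

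For part i), I would first recall that $W \subseteq X$ and that $A$ is total, so by the very definition of these subsets $W^A_N = A^{-1}(N) \cap W \subseteq A^{-1}(N) = X^A_N$. Next, since $A \in \mathcal{W}_+(X,{\bf 2})$ is in particular down-negative (this is built into the definition of a $+$WBTM, and by Proposition \ref{equivopup} it is equivalent to order-preservation), the set $X^A_N = A^{-1}(N)$ is a down-set of $X$; as noted in Section \ref{preliminari}, a down-set $Z$ satisfies $\downarrow Z = Z$. Applying the monotone operator $\downarrow$ to the inclusion $W^A_N \subseteq X^A_N$ therefore yields $\downarrow W^A_N \subseteq \downarrow X^A_N = X^A_N$. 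Combining this with the inclusion $X^A_N \subseteq \downarrow W^A_N$ of Proposition \ref{negincl}-i gives the desired equality $X^A_N = \downarrow W^A_N$.

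Part ii) is entirely dual: here $A \in \mathcal{W}_-(X,{\bf 2})$ is up-positive (again by its definition together with Proposition \ref{equivopup}), so $X^A_P = A^{-1}(P)$ is an up-set, whence $W^A_P \subseteq X^A_P$ implies $\uparrow W^A_P \subseteq \uparrow X^A_P = X^A_P$; together with the reverse inclusion $X^A_P \subseteq \uparrow W^A_P$ from Proposition \ref{negincl}-ii this yields $X^A_P = \uparrow W^A_P$. I do not expect any genuine obstacle here: all the substantive work is done in Proposition \ref{negincl}, and the corollary only requires the trivial observation that $W^A_N$ (resp. $W^A_P$) sits inside $X^A_N$ (resp. $X^A_P$). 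The single point worth stating carefully is the justification that $X^A_N$ is a down-set and $X^A_P$ is an up-set, which is precisely the down-negativity and up-positivity guaranteed by membership in $\mathcal{W}_+(X,{\bf 2})$ and $\mathcal{W}_-(X,{\bf 2})$ respectively.
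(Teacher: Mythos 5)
Your proof is correct and follows essentially the same route as the paper: the inclusion $X^A_N \subseteq \downarrow W^A_N$ is taken from Proposition \ref{negincl}, and the reverse inclusion is derived from the down-negativity of $A$ (dually, up-positivity for part ii)). The only cosmetic difference is that the paper verifies $\downarrow W^A_N \subseteq X^A_N$ by chasing an element $\overline{w} \le w \in W^A_N$, whereas you package the same fact as ``$X^A_N$ is a down-set containing $W^A_N$, and $\downarrow$ is monotone'' --- the substance is identical.
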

\begin{proof}
$i)$ Let $\overline{w} \in \downarrow W^A_N.$ By definition of down-set, there exists an element $w\in W^A_N$ such that $\overline{w} \le w.$ Since $w\in W^A_N=A^{-1}(N)\cap W,$ we have that $A(w)=N,$ and since $A$ is down-negative, it follows that $A(\overline{w})=N,$ i.e. $\overline{w} \in X^A_N.$ Therefore $\downarrow W^A_N \subseteq X^A_N.$ Then the assertion follows by Proposition \ref{negincl}-$i)$.

$ii)$ This follows by the same reasoning of $i)$ and using then Proposition \ref{negincl}-$ii)$.
\end{proof}
\begin{corollary}\label{maxneg}
$i)$ If $A\in \mathcal{W}_+(X,{\bf 2})$  and $W$ is a core for $A,$ then $Max(X^A_N) \subseteq W_N^A.$

$ii)$ If $A\in \mathcal{W}_-(X,{\bf 2})$  and $W$ is a core for $A,$ then $Min(X^A_P) \subseteq W_P^A.$
\end{corollary}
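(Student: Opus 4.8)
The plan is to deduce this immediately from the equality established in Corollary \ref{negug}, so the work done there carries all the weight. For part $i)$, I would fix a maximal element $z \in Max(X^A_N)$ and argue that it must already lie in $W$. Since $z \in X^A_N$ and Corollary \ref{negug}-$i)$ gives $X^A_N = \downarrow W^A_N$, there exists some $w \in W^A_N$ with $z \le w$. But $W^A_N = A^{-1}(N) \cap W \subseteq X^A_N$, so $w$ is itself an element of $X^A_N$ lying above $z$. The maximality of $z$ in $X^A_N$ then forces $z = w$, whence $z = w \in W^A_N$. Since $z$ was an arbitrary maximal negative, this yields $Max(X^A_N) \subseteq W^A_N$.

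For part $ii)$ I would run the order-dual of this argument, now invoking Corollary \ref{negug}-$ii)$, which gives $X^A_P = \uparrow W^A_P$ for $A \in \mathcal{W}_-(X,{\bf 2})$. Fixing $z \in Min(X^A_P)$, there is some $w \in W^A_P$ with $w \le z$; since $W^A_P \subseteq X^A_P$, the element $w$ lies in $X^A_P$ below the minimal element $z$, so $z = w \in W^A_P$, giving $Min(X^A_P) \subseteq W^A_P$.

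There is no genuine obstacle here: the corollary is a short consequence of the characterization $X^A_N = \downarrow W^A_N$ (respectively $X^A_P = \uparrow W^A_P$), and the only point to check is the elementary set-theoretic fact that the generators of a down-set that themselves lie in that down-set must contain all of its maximal elements. In effect, the substantive content was already absorbed into the proof of Proposition \ref{negincl} and Corollary \ref{negug}; this statement simply records that the maximal negatives (resp. minimal positives) are \emph{unavoidable} constituents of any core, a fact that will later let me pin down the fundamental core explicitly.

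\begin{proof}
$i)$ Let $z \in Max(X^A_N)$. By Corollary \ref{negug}-$i)$ we have $z \in X^A_N = \downarrow W^A_N$, so there exists $w \in W^A_N$ with $z \le w$. Since $W^A_N = A^{-1}(N) \cap W \subseteq X^A_N$, the element $w$ belongs to $X^A_N$. By the maximality of $z$ in $X^A_N$, the inequality $z \le w$ forces $z = w$, hence $z \in W^A_N$. Therefore $Max(X^A_N) \subseteq W^A_N$.

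$ii)$ This is proved by the order-dual argument, using Corollary \ref{negug}-$ii)$ in place of Corollary \ref{negug}-$i)$.
\end{proof}
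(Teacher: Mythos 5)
Your proof is correct and follows essentially the same route as the paper: the paper cites Proposition \ref{negincl}-$i)$ (the inclusion $X^A_N \subseteq \downarrow W^A_N$) where you cite Corollary \ref{negug}-$i)$ (the equality), but only the inclusion is used, and both arguments then extract $w \in W^A_N$ above $z$ and conclude $z = w$ by maximality. Your explicit remark that $W^A_N \subseteq X^A_N$ (needed to apply maximality) is a small point the paper leaves implicit; otherwise the two proofs coincide.
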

\begin{proof}
$i)$ Let $w\in Max(X^A_N),$ by Proposition \ref{negincl}-$i)$ it follows that $w \in \downarrow W^A_N.$ This implies that there exists an element $\overline{w} \in W^A_N$ such that $\overline{w} \ge w.$ By the maximality of $w$ in $X^A_N,$ it holds that $w=\overline{w} \in W^A_N.$

$ii)$ Analogously using Proposition \ref{negincl}-$ii)$.
\end{proof}
\begin{corollary}\label{maxneg2}
i) Let $A\in \mathcal{W}_+(X,{\bf 2})$. Let $W$ be a core for $A$ on $X$ such that $W^A_N$ is an anti-chain on $X,$ then
$$Max(X^A_N)=W^A_N.$$

ii) Let $A\in \mathcal{W}_-(X,{\bf 2})$. Let $W$ be a core for $A$ on $X$ such that $W^A_P$ is an anti-chain on $X,$ then
$$Min(X^A_P)=W_P^A.$$
\end{corollary}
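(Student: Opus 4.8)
The plan is to prove each equality as a double inclusion, noting that one direction is already available from the preceding corollaries and that the anti-chain hypothesis is exactly what supplies the other. I would treat part $i)$ in full and then obtain part $ii)$ by the dual argument. For $i)$, the inclusion $Max(X^A_N) \subseteq W^A_N$ is precisely the content of Corollary \ref{maxneg}-$i)$, so nothing new is needed there; the whole work lies in establishing the reverse inclusion $W^A_N \subseteq Max(X^A_N)$ under the extra assumption that $W^A_N$ is an anti-chain.

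To get that reverse inclusion, I would fix $w \in W^A_N$. Since $A$ is total, $W^A_N = A^{-1}(N)\cap W \subseteq A^{-1}(N) = X^A_N$, so $w \in X^A_N$ and it only remains to see that $w$ is maximal there. Arguing by contradiction, suppose $w$ is not maximal in $X^A_N$; then there is $u \in X^A_N$ with $u > w$. Here I would invoke Corollary \ref{negug}-$i)$, which gives $X^A_N = \downarrow W^A_N$, so that $u \in \downarrow W^A_N$ and hence there exists $\overline{w} \in W^A_N$ with $\overline{w} \ge u$. Chaining the inequalities yields $\overline{w} \ge u > w$, whence $\overline{w} > w$ and in particular $\overline{w} \neq w$. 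Thus $w$ and $\overline{w}$ are two distinct comparable elements of $W^A_N$, contradicting the hypothesis that $W^A_N$ is an anti-chain. Therefore $w \in Max(X^A_N)$, and the second inclusion follows.

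For part $ii)$ I would run the same reasoning with all inequalities reversed, calling on Corollary \ref{maxneg}-$ii)$ for the inclusion $Min(X^A_P) \subseteq W^A_P$ and on Corollary \ref{negug}-$ii)$ (which provides $X^A_P = \uparrow W^A_P$) in place of Corollary \ref{negug}-$i)$. I do not anticipate any genuine obstacle: the argument is a short contradiction built entirely on results already proved. The only point requiring a moment of care is the verification that the witness $\overline{w}$ is \emph{strictly} different from $w$, so that the anti-chain property is actually violated; this is guaranteed by the strict inequality $u > w$ coming from the failure of maximality.
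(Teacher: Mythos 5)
Your proof is correct and follows essentially the same route as the paper's: one inclusion is quoted from Corollary \ref{maxneg}-$i)$, and the reverse inclusion is obtained by the identical contradiction argument, using Corollary \ref{negug}-$i)$ (i.e.\ $X^A_N=\downarrow W^A_N$) to produce two distinct comparable elements of the anti-chain $W^A_N$, with part $ii)$ handled dually via Corollaries \ref{maxneg}-$ii)$ and \ref{negug}-$ii)$. Your explicit remark that the witness is strictly above $w$ is precisely the point the paper's chain $\tilde{w}\ge\overline{w}>w$ encodes, so there is nothing to add.
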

\begin{proof}
$i)$ By Corollary \ref{maxneg}-$i)$, we need to prove the inclusion $Max(X^A_N) \supseteq W^A_N.$ Suppose that $w \in W^A_N$ and that (by contradiction) $w \not\in Max(X^A_N).$ Then there exists an element $\overline{w} \in X^A_N$ such that $\overline{w} > w.$ By Corollary \ref{negug}-$i)$, we have that $\overline{w} \in \downarrow W^A_N.$ Then there exists an element $\tilde{w} \in W^A_N$ such that $\tilde{w} \ge \overline{w},$ and hence we have that
$\tilde{w} \ge \overline{w} > w,$ with $\tilde{w},\,\,\, w \in W^A_N,$ and this contradicts the hypothesis that $W^A_N$ is an anti-chain on $X.$

$ii)$ Similar analysis using Corollary \ref{maxneg}-$ii)$ and Corollary \ref{negug}-$ii)$.
\end{proof}


\begin{proposition}\label{posincl}
i) Let $A\in \mathcal{W}_+(X,{\bf 2})$ and $W$ a core for $A.$ Then
$$X^A_P \subseteq (\uparrow W^A_P) \cup (\uparrow(W_N^A)^c).$$

ii) Let $A\in \mathcal{W}_-(X,{\bf 2})$ and $W$ a core for $A.$ Then
$$X^A_N \subseteq (\downarrow W^A_N) \cup (\downarrow(W_P^A)^c).$$
\end{proposition}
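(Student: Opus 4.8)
The plan is to mirror the flip-and-contradict argument of Proposition \ref{negincl}, but now flipping a $P$-value \emph{down} to $N$ rather than an $N$-value up to $P$, and invoking the already-established Corollary \ref{negug} to control complements. I treat part i) in detail; part ii) is the order-dual.

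Set $U = (\uparrow W^A_P) \cup (\uparrow (W^A_N)^c)$ and suppose, for contradiction, that $\mathcal{G} = X^A_P \setminus U$ is nonempty. Since $X$ is finite, I pick a \emph{minimal} element $z$ of $\mathcal{G}$ and define $A' : X \to {\bf 2}$ to agree with $A$ everywhere except $A'(z)=N$. Two preliminary observations cost nothing: $U$ is an up-set, being a union of two up-sets, so no element below $z$ lies in $U$; and $z \notin \uparrow W^A_P$ forces $z \notin W$ (otherwise $A(z)=P$ would put $z \in W^A_P \subseteq \uparrow W^A_P$), so that $A'_{|W}=A_{|W}$ while $A' \neq A$. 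The whole task is then to show $A'$ is a +WBTM, which contradicts that $W$ is a core for $A$. By Proposition \ref{equivopup} it suffices to check that $A'$ is up-positive and complemented positive.

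Up-positivity is the easy half. If $w_1 \le w_2$ and $A'(w_1)=P$, then $w_1 \neq z$, so $A(w_1)=P$ and $w_1 \in X^A_P$. Minimality of $z$ rules out $w_1 < z$: such a $w_1$ would lie in $X^A_P$ and, since $U$ is an up-set with $z \notin U$, outside $U$, hence in $\mathcal{G}$, contradicting minimality. Consequently $w_2 \neq z$ (if $w_2=z$ then $w_1<z$), and up-positivity of $A$ gives $A'(w_2)=A(w_2)=P$. The complemented-positivity of $A'$ is the step I expect to be the crux, and it is exactly where Corollary \ref{negug}-i) enters. Flipping $z$ creates a single new negative value, so the only possible failure is that $z^c$ is itself negative, for then $w=z^c \in X^A_N$ would demand $A'(w^c)=A'(z)=P$, which is false. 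I rule this out by a short claim: if $A(z^c)=N$, then $z^c \in X^A_N = \downarrow W^A_N$ by Corollary \ref{negug}-i), so $z^c \le v$ for some $v \in W^A_N$; applying the involution $c$ gives $v^c \le (z^c)^c = z$, i.e. $z \in \uparrow (W^A_N)^c \subseteq U$, contradicting $z \in \mathcal{G}$. Hence $A(z^c)=P$. A routine case split on whether an $N$-element $w$ of $A'$ equals $z$, equals $z^c$, or neither (using $A$ complemented positive in the last case, and $A(z^c)=P$ when $w=z$) then shows $A'$ sends the complement of every $N$-element to $P$. Thus $A'$ is a +WBTM distinct from $A$ yet agreeing with $A$ on $W$, the desired contradiction; therefore $\mathcal{G}=\emptyset$ and $X^A_P \subseteq U$.

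For part ii) I would run the mirror-image argument for $A \in \mathcal{W}_-(X,{\bf 2})$: choose a \emph{maximal} element $z$ of $X^A_N \setminus \big((\downarrow W^A_N) \cup (\downarrow (W^A_P)^c)\big)$, set $A'(z)=P$, verify down-negativity exactly as above, and for complemented-negativity rule out $A(z^c)=P$ using Corollary \ref{negug}-ii) in place of i). The only genuine obstacle in either part is the complementation step, and it is precisely Corollary \ref{negug} that converts a hypothetical bad complement of $z$ into membership of $z$ in the forbidden up-set (resp. down-set), closing the argument.
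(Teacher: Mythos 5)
Your proof is correct and takes essentially the same route as the paper's: the same single-point flip $A'(z)=N$ at a minimal element of $X^A_P\setminus\bigl(\uparrow W^A_P\cup\uparrow(W^A_N)^c\bigr)$, with Corollary \ref{negug}-$i)$ used in exactly the same way to convert a hypothetical $A(z^c)=N$ into $z\in\uparrow(W^A_N)^c$ and thus secure complemented positivity, and the dual construction for part ii). The only differences are organizational (you check up-positivity before complementation and rule out $w_1<z$ via the up-set property of $U$ rather than the paper's three-case split), so nothing needs to change.
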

\begin{proof}
$i)$ First of all let $w \in W\cap X^A_P$ then $w\in W^A_P \subseteq \uparrow W^A_P.$ Then assume that $w \not\in W.$ Suppose by contradiction that $w\not\in \uparrow W^A_P$ and $w\not\in \uparrow ((W^A_N)^c).$ Set
$$\mathcal{F}=\{ u \in X^A_P : u\not\in W, u \not\in \uparrow W^A_P, u \not\in \uparrow ((W^A_N)^c) \}=X^A_P \setminus (\uparrow W^A_P \cup \uparrow (W^A_N)^c)).$$
It follows that $w\in \mathcal{F}$ and that $\mathcal{F}$ is not empty. \\Let $z$ be a minimal element of $\mathcal{F}.$ Define $A':X \to {\bf 2}$
setting:
$$A'(u)=\left\{ \begin{array}{lll}
                A(u) & \textrm{if} & u \neq z \\
                N    & \textrm{if} & u=z.
                \end{array} \right.
$$
{\it Step 1}
We need to prove that $A'$ is a +WBTM on $X.$ This will conduce to a contradiction because $A_{|W}=A'_{|W}$ (note that $z\not\in W$) and $A'(z)=N\neq P =A(z)$ and because by hypothesis $W$ is a core for $A.$
We start proving that $A'$ is complemented positive. Suppose that $v \in X$ and that $A'(v)=N.$ We need to prove that $A'(v^c)=P.$
\begin{itemize}
\item[1)] Suppose that $v\neq z$ and $v \neq z^c.$ In this case, we have that $N=A'(v)=A(v).$ Since $A$ is complemented positive, we have that $A(v^c)=P,$ and hence, since $v^c \neq z,$ by definition of $A'$ we have that $A'(v^c)=A(v^c)=P.$ \\
    Before analyzing the remaining cases, we prove that $A'(z^c)=P.$ Indeed, by contradiction, suppose that $A'(z^c)=N;$ since $z^c \neq z$ we will have that $A(z^c)=A'(z^c)=N,$ and hence $z^c \in X^A_N.$ By Corollary \ref{negug}-i), it follows that $z^c \in \downarrow W^A_N.$ Hence there exists an element $\overline{z} \in W^A_N$ such that $z^c \le \overline{z},$ and hence: $\overline{z}^c \le (z^c)^c=z,$ with $\overline{z}^c \in (W^A_N)^c.$ Therefore $z \in \uparrow (W_N^A)^c,$ against the hypothesis that $z \in\mathcal{F}.$ \\
\item[2)] Suppose that $v=z.$ In this case $A'(v^c)=A'(z^c)=P.$\\
\item[3)] Suppose that $v=z^c.$ In this case the hypothesis $A'(v)=N$ is empty because we have proved that $A'(z^c)=P.$
\end{itemize}
Hence $A'$ is complemented positive.
\\{\it Step 2} We prove now that $A'$ is up-positive. Let $w_1,w_2 \in X$ be such that $w_1 \le w_2$ and suppose that $A'(w_1)=P.$ We need to prove that $A'(w_2)=P.$
\begin{itemize}
\item[1)] Suppose that $w_1 \neq z$ and $w_2 \neq z.$ In this case $A(w_1)=A'(w_1)=P,$ and since $w_1 \le w_2$ and $A$ is up-positive, we will have that $A(w_2)=P.$ Since $w_2 \neq z,$ it follows that $A'(w_2)=A(w_2)=P.$ \\
\item[2)] Suppose that $w_1=z < w_2.$ In this case we will have that $A'(z)=N,$ and hence the hypothesis $A'(w_1)=P$ is empty.\\
\item[3)] Suppose that $w_1 < w_2=z.$ In this case we have that $A'(w_2)=A'(z)=N.$ We will show that the hypothesis $A'(w_1)=P$ leads to a contradiction. Suppose that $A'(w_1)=P.$ Since $w_1 \neq z,$ we will have that $A(w_1)=A'(w_1)=P,$ i.e. that $w_1 \in X^A_P.$ On the other hand, $w_1 \not\in \uparrow W^A_P \cup \uparrow (W^A_N)^c.$ Indeed if $w_1 \in \uparrow W^A_P \cup \uparrow (W^A_N)^c,$ since $\uparrow W^A_P$ and $\uparrow(W^A_N)^c$ are up-sets, from $w_1 < z$ it holds that $z \in \uparrow W^A_P \cup \uparrow (W^A_N)^c,$ in contradiction with $z \in \mathcal{F}.$ Therefore $w_1 \in \mathcal{F}$ and $w_1 < z$ and this is in contradiction with the minimality of $z \in \mathcal{F}.$
\end{itemize}
Hence $A'$ is up-positive.

In the same manner we can prove $ii)$, using Corollary \ref{negug}-$ii)$.
\end{proof}
\begin{corollary} \label{posug}
i) Let $A\in \mathcal{W}_+(X,{\bf 2})$ and $W$ a core for $A.$ Then
$$X^A_P=(\uparrow W_P^A) \cup (\uparrow(W_N^A)^c).$$

ii) Let $A\in \mathcal{W}_-(X,{\bf 2})$ and $W$ a core for $A.$ Then
$$X^A_N=(\downarrow W^A_N) \cup (\downarrow(W_P^A)^c).$$
\end{corollary}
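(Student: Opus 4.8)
The plan is to observe that Proposition~\ref{posincl} already supplies the inclusion $X^A_P \subseteq (\uparrow W_P^A) \cup (\uparrow(W_N^A)^c)$ for part i), so to establish the claimed equality it suffices to prove the reverse inclusion $(\uparrow W_P^A) \cup (\uparrow(W_N^A)^c) \subseteq X^A_P$. I would prove this by showing separately that each of the two up-sets in the union is contained in $X_P^A$, exactly in the spirit of the short argument used to deduce Corollary~\ref{negug} from Proposition~\ref{negincl}.

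First I would handle $\uparrow W_P^A$. Taking $w \in \uparrow W_P^A$, by definition there is some $u \in W_P^A$ with $u \le w$. Since $u \in W_P^A = A^{-1}(P) \cap W$ we have $A(u)=P$, and since $A$ is up-positive (being a +WBTM) the set $A^{-1}(P)$ is an up-set, so $u \le w$ forces $A(w)=P$, i.e. $w \in X_P^A$.

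Next I would handle $\uparrow (W_N^A)^c$. Taking $w \in \uparrow (W_N^A)^c$, there is $u \in (W_N^A)^c$ with $u \le w$, say $u = v^c$ with $v \in W_N^A$. From $v \in A^{-1}(N)$ and the complemented positivity of $A$ (that is, $A^{-1}(N)^c \subseteq A^{-1}(P)$) I obtain $u = v^c \in A^{-1}(P)$, so $A(u)=P$; again up-positivity together with $u \le w$ gives $A(w)=P$. Combining the two cases yields $(\uparrow W_P^A) \cup (\uparrow(W_N^A)^c) \subseteq X^A_P$, and together with Proposition~\ref{posincl}-i) this proves i).

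For ii) I would run the dual argument, replacing up-positivity by down-negativity, $\uparrow$ by $\downarrow$, and complemented positivity by complemented negativity ($A^{-1}(P)^c \subseteq A^{-1}(N)$), and using Proposition~\ref{posincl}-ii) for the nontrivial inclusion. No genuine obstacle arises here, since the content is essentially the two defining closure properties of a +WBTM; the only point demanding care is the bookkeeping in the second case, where one must use that an element of $(W_N^A)^c$ is the complement $v^c$ of some $v \in W_N^A$ and invoke \emph{complemented} positivity, rather than up-positivity, to place it into $A^{-1}(P)$ before applying up-positivity.
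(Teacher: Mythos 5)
Your proposal is correct and follows essentially the same route as the paper: the paper likewise deduces the easy inclusion $(\uparrow W_P^A)\cup(\uparrow (W_N^A)^c)\subseteq X_P^A$ from up-positivity and then invokes Proposition~\ref{posincl} for the converse. In fact you are slightly more careful than the paper, whose proof simply asserts $A(w)=P$ for $w\in W_P^A\cup (W_N^A)^c$, while you make explicit that for $w\in(W_N^A)^c$ this requires complemented positivity before up-positivity can be applied.
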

\begin{proof}
$i)$ Let $\overline{w} \in \uparrow W^A_P \cup \uparrow (W^A_N)^c.$ Then there exists an element $w \in W^A_P \cup (W^A_N)^c$ such that $w \le \overline{w}.$ Since $A(w)=P$ and $A$ is up-positive, we have that $A(\overline{w})=P,$ i.e. $\overline{w} \in X^A_P.$ This implies that $\uparrow W^A_P \cup \uparrow (W^A_N)^c \subseteq X^A_P.$ The assertion follows by Proposition \ref{posincl}-$i)$.

$ii)$ Similarly using Proposition \ref{posincl}-$ii)$
\end{proof}
\begin{theorem} \label{generators}
i) Let $A\in \mathcal{W}_+(X,{\bf 2})$  and $W$ a core for $A,$ then
$$X=[ \uparrow W_P^A \cup \uparrow((W_N^A)^c) ]\cup^d \downarrow W^A_N .$$

ii) Let $A\in \mathcal{W}_-(X,{\bf 2})$  and $W$ a core for $A,$ then
$$X=[ (\downarrow W^A_N) \cup (\downarrow(W_P^A)^c) ]\cup^d \uparrow W^A_P .$$
\end{theorem}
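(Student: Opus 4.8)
The plan is to recognize that Theorem \ref{generators} is essentially a repackaging of the two set-equalities already established in Corollary \ref{negug} and Corollary \ref{posug}, glued together by the single structural fact that $A$ is a \emph{total} map. So almost no new work is required; the theorem collects the preceding corollaries into one formula.

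First I would exploit totality. Since $A\in\mathcal{W}_+(X,\mathbf 2)$ is a BTM, its domain is all of $X$, and because $A$ is a function into the two-element set $\{N,P\}$, the fibres $A^{-1}(P)$ and $A^{-1}(N)$ partition $X$. In the notation $X^A_P=A^{-1}(P)$, $X^A_N=A^{-1}(N)$ this reads
$$X = X^A_P \cup^d X^A_N,$$
a genuinely disjoint union, the disjointness coming purely from single-valuedness of $A$ and from $P\neq N$.

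Next I would substitute the descriptions of the two blocks supplied earlier. Corollary \ref{posug}-$i)$ gives $X^A_P=(\uparrow W^A_P)\cup(\uparrow(W^A_N)^c)$, and Corollary \ref{negug}-$i)$ gives $X^A_N=\downarrow W^A_N$. Plugging these into the partition produces
$$X = \big[\,(\uparrow W^A_P)\cup(\uparrow(W^A_N)^c)\,\big]\cup^d \downarrow W^A_N,$$
which is exactly the assertion in $i)$. The disjointness symbol $\cup^d$ is justified here precisely because $X^A_P\cap X^A_N=\emptyset$; there is no separate order-theoretic disjointness to check. Part $ii)$ then follows by the mirror-image argument, invoking Corollary \ref{posug}-$ii)$ to rewrite $X^A_N=(\downarrow W^A_N)\cup(\downarrow(W^A_P)^c)$ and Corollary \ref{negug}-$ii)$ to rewrite $X^A_P=\uparrow W^A_P$.

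I do not expect any real obstacle. All the analytical content — the extension-by-flipping-a-value constructions and the maximality/minimality arguments that yield the upper and lower containments — was already discharged in Propositions \ref{negincl} and \ref{posincl} and their corollaries. The one point I would state explicitly, lest it be taken for granted, is that the two bracketed sets on the right-hand side are disjoint: this is immediate from $A$ being total and single-valued (they are the two fibres $X^A_P$ and $X^A_N$), and it is exactly what legitimizes writing $\cup^d$ rather than $\cup$.
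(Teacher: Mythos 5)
Your proposal is correct and matches the paper's own proof exactly: the authors likewise observe that totality gives the partition $X = X^A_P \cup^d X^A_N$ and then cite Corollaries \ref{posug} and \ref{negug} to rewrite the two fibres. Your explicit remark that the disjointness in $\cup^d$ comes solely from $A$ being a single-valued total map is a correct and welcome clarification of what the paper leaves implicit.
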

\begin{proof}
i) Since $X=X^A_P \cup^d X^A_N,$ the assertion is a direct consequence of Corollaries \ref{negug}-$i)$ and \ref{posug}-$i)$.

ii) Similarly using the Corollaries \ref{negug}-$ii)$ and \ref{posug}-$ii)$.
\end{proof}
\begin{proposition} \label{minincl}
i) Let $A\in \mathcal{W}_+(X,{\bf 2})$ and $W$ a core for $A$, then
$$Min(X^A_P) \subseteq W^A_P \cup (W^A_N)^c.$$

ii) Let $A\in \mathcal{W}_-(X,{\bf 2})$ and $W$ a core for $A$, then
$$Max(X^A_N) \subseteq W^A_N \cup (W^A_P)^c.$$
\end{proposition}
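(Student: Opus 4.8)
The plan is to deduce everything from the already-established set equality in Corollary \ref{posug}-$i)$, namely $X^A_P=(\uparrow W_P^A)\cup(\uparrow(W_N^A)^c)$, together with the complemented-positivity of $A$. The whole argument is a minimality argument: an element that is minimal in $X^A_P$ cannot sit strictly above another element of $X^A_P$, so whatever witnesses its membership in the right-hand union is forced to coincide with it.

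First I would fix $w\in Min(X^A_P)$. Since $Min(X^A_P)\subseteq X^A_P$ and, by Corollary \ref{posug}-$i)$, $X^A_P=(\uparrow W_P^A)\cup(\uparrow(W_N^A)^c)$, I split into two cases according to which of the two up-sets contains $w$. In the first case $w\in\uparrow W_P^A$, so there is $u\in W_P^A$ with $u\le w$; because $W_P^A\subseteq A^{-1}(P)=X^A_P$, the element $u$ lies in $X^A_P$ and is $\le w$, and the minimality of $w$ in $X^A_P$ forces $u=w$, whence $w\in W_P^A$. In the second case $w\in\uparrow(W_N^A)^c$, so there is $\overline{w}\in W_N^A$ with $\overline{w}^c\le w$. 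The point is to certify that $\overline{w}^c\in X^A_P$: since $\overline{w}\in W_N^A\subseteq A^{-1}(N)$ and $A$ is complemented positive, the inclusion $A^{-1}(N)^c\subseteq A^{-1}(P)$ gives $\overline{w}^c\in A^{-1}(P)=X^A_P$. Again minimality of $w$ yields $\overline{w}^c=w$, so $w\in(W_N^A)^c$. Combining the two cases gives $w\in W_P^A\cup(W_N^A)^c$, which is the asserted inclusion.

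For $ii)$ I would run the dual argument, replacing Corollary \ref{posug}-$i)$ by Corollary \ref{posug}-$ii)$ (so $X^A_N=(\downarrow W_N^A)\cup(\downarrow(W_P^A)^c)$), $Min$ by $Max$, up-sets by down-sets, and complemented positive by complemented negative (so that $A^{-1}(P)^c\subseteq A^{-1}(N)$ certifies the witness $\overline{w}^c\in X^A_N$ in the second case).

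I do not expect any genuine obstacle here: once Corollary \ref{posug} is in hand, the proposition is essentially a one-line minimality observation. The only place that requires a little care is the second case, where one must first check that the witness $\overline{w}^c$ actually lies in $X^A_P$ (via complemented positivity) before invoking minimality of $w$; omitting this verification would leave the minimality argument without a legitimate comparison element inside $X^A_P$.
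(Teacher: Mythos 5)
Your proof is correct and takes essentially the same route as the paper's: both fix $w\in Min(X^A_P)$, invoke Corollary \ref{posug}-$i)$ to place $w$ in $\uparrow W^A_P$ or $\uparrow(W^A_N)^c$, and conclude by minimality that $w$ equals the witness, with the dual argument via Corollary \ref{posug}-$ii)$ for part $ii)$. Your explicit check that the witness $\overline{w}^c$ in the second case actually lies in $X^A_P$ (via complemented positivity) is a detail the paper's proof leaves implicit, and it is a legitimate small tightening rather than a different argument.
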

\begin{proof}
$i)$ Suppose that $w \in Min(X^A_P).$ By Corollary \ref{posug}-$i)$, it follows that $w \in \uparrow W^A_P$ or $w \in \uparrow (W^A_N)^c.$ If $w \in \uparrow W^A_P,$ then there exists $\overline{w} \in W^A_P$ such that $\overline{w} \le w.$ By the minimality of $w$ in $X^A_P,$ we will have that $w=\overline{w} \in W^A_P.$ If $w \in \uparrow (W^A_N)^c,$ then there exists $\overline{w} \in (W^A_N)^c$ such that $\overline{w} \le w.$ By the minimality of $w$ in $X^A_P,$ it follows also that $w=\overline{w} \in (W^A_N)^c.$ Hence $Min(X^A_P) \subseteq W^A_P \cup (W^A_N)^c.$

$ii)$ Likewise using Corollary \ref{posug}-$ii)$.
\end{proof}
\begin{proposition}\label{minmin}
i) Let $A\in \mathcal{W}_+(X,{\bf 2})$ and $W$ a core for $A$ ,
then
$$Min(X^A_P)=Min(W^A_P \cup (W^A_N)^c).$$

ii) Let $A\in \mathcal{W}_-(X,{\bf 2})$ and $W$ a core for $A$ ,
then
$$Max(X^A_N)=Max(W^A_N \cup (W^A_P)^c).$$
\end{proposition}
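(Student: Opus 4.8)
The plan is to recognize the statement as a purely order-theoretic fact about the set $S := W^A_P \cup (W^A_N)^c$, whose only content-bearing ingredient is the description of $X^A_P$ already obtained in the previous corollaries. First I would record the two facts that drive the argument. By Corollary \ref{posug}-$i)$ one has $X^A_P = (\uparrow W^A_P) \cup (\uparrow (W^A_N)^c) = \uparrow S$, so that $S$ generates $X^A_P$ as an up-set. Moreover $S \subseteq X^A_P$: indeed $W^A_P \subseteq A^{-1}(P) = X^A_P$, and since $A$ is complemented positive we have $(W^A_N)^c \subseteq (X^A_N)^c = A^{-1}(N)^c \subseteq A^{-1}(P) = X^A_P$. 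This last verification, although elementary, is the only place where the hypothesis that $A$ is a +WBTM (specifically its complemented-positivity) genuinely enters, and it is the step I would treat with the most care.

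Granting these two facts, the desired equality $Min(X^A_P) = Min(S)$ is an instance of the general principle that $Min(\uparrow S) = Min(S)$ in any poset. For the inclusion $\subseteq$ I would start from Proposition \ref{minincl}-$i)$, which already gives $Min(X^A_P) \subseteq S$; then, since $S \subseteq X^A_P$, any $m \in Min(X^A_P)$ lies in $S$ and cannot be dominated from below by a strictly smaller element of $S$ (such an element would lie in $X^A_P$ too, contradicting the minimality of $m$ there). Hence $m \in Min(S)$, proving $Min(X^A_P) \subseteq Min(S)$.

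For the reverse inclusion $\supseteq$, I would take $m \in Min(S)$. From $S \subseteq X^A_P$ we get $m \in X^A_P$, and it remains to show that $m$ is minimal there. If some $y \in X^A_P$ satisfied $y < m$, then $y \in \uparrow S$ by the identity $X^A_P = \uparrow S$, so there would be $s \in S$ with $s \le y < m$; this yields $s < m$ with $s \in S$, contradicting the minimality of $m$ in $S$. Therefore $m \in Min(X^A_P)$, and the two inclusions together establish $i)$.

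Finally, part $ii)$ is the order-dual of $i)$: one replaces $X^A_P$ by $X^A_N$, the generating set by $W^A_N \cup (W^A_P)^c$, and ``up'' by ``down''. The two ingredients are then supplied by Corollary \ref{posug}-$ii)$ (giving $X^A_N = \downarrow(W^A_N \cup (W^A_P)^c)$) and Proposition \ref{minincl}-$ii)$, with the complemented-negativity of $A$ playing the role of complemented-positivity; reversing every order relation in the argument above yields $Max(X^A_N) = Max(W^A_N \cup (W^A_P)^c)$, as required.
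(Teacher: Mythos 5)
Your proof is correct and follows essentially the same route as the paper's: both inclusions rest on Proposition \ref{minincl}-$i)$ and Corollary \ref{posug}-$i)$ with the same contradiction arguments, your only cosmetic change being to package them as the general poset identity $Min(\uparrow S)=Min(S)$ for $S=W^A_P\cup(W^A_N)^c$. Your explicit check that $S\subseteq X^A_P$ via complemented positivity (which the paper leaves implicit in ``Since $A(\overline{z})=P$'') and your dual treatment of part $ii)$ likewise match the paper's argument.
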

\begin{proof}
$i)$ We start proving the inclusion $Min(X^A_P) \subseteq Min(W^A_P \cup (W^A_N)^c).$ Suppose that $z \in Min(X^A_P).$ By Proposition \ref{minincl}-$i)$, it follows that $z \in W^A_P \cup (W^A_N)^c.$ Suppose by contradiction that $z\not\in Min (W^A_P \cup (W^A_N)^c),$ then there exists $\overline{z} \in W^A_P \cup (W^A_N)^c$ such that $\overline{z} <z.$ Since $A(\overline{z})=P,$ this contradicts the minimality of $z$ in $X^A_P.$ Hence $Min(X^A_P) \subseteq Min(W^A_P \cup (W^A_N)^c).$ \\
Now we prove the other inclusion $Min(X^A_P) \supseteq Min(W^A_P \cup (W^A_N)^c).$ Suppose that $w \in Min(W^A_P \cup (W^A_N)^c).$ Obviously $w\in X^A_P.$ If, by contradiction, $w\not\in Min (X^A_P),$ then there exists an element $\overline{w} \in X^A_P$ such that $\overline{w} <w.$ By Corollary \ref{posug}-$i)$ it follows that either $\overline{w} \in \uparrow W^A_P$ or $\overline{w} \in \uparrow (W^A_N)^c.$
\begin{itemize}
\item[1)] if $\overline{w} \in \uparrow W^A_P,$ there exists an element $\tilde{w} \in W^A_P$ such that $\tilde{w} \le \overline{w},$
therefore $\tilde{w} \le \overline{w} <w,$ and this contradicts that $w \in Min(W^A_P \cup (W^A_N)^c).$ \\
\item[2)] if $\overline{w} \in \uparrow (W^A_N)^c,$ then there exists an element $\tilde{w} \in (W^A_N)^c$ such that $\tilde{w} \le \overline{w},$ hence $\tilde{w} \le \overline{w} < w,$ and this is a contradiction to $w\in Min(W^A_P \cup (W^A_N)^c).$
\end{itemize}

$ii)$ Analogously using Proposition \ref{minincl}-$ii)$ and Corollary \ref{posug}-$ii)$.
\end{proof}
\begin{proposition}\label{primoNucleo}
Let  $A$ be a +WBTM or a -WBTM on $X.$ Then, setting
$$N(A)=Min(X^A_P)\cup Max(X^A_N),$$
it follows that $N(A)$ is a core for $A$ on $X.$
\end{proposition}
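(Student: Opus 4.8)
The plan is to verify directly that $N(A)$ meets the two requirements in the definition of a core for $A$. Condition N1) holds automatically once we set $\varphi = A_{|N(A)}$, so the entire content lies in the uniqueness demanded by N2): writing $W = N(A)$, I must show that $A$ is the \emph{only} map of $\mathcal{W}_+(X,{\bf 2})$ (respectively $\mathcal{W}_-(X,{\bf 2})$) whose restriction to $W$ equals $\varphi$. Existence is immediate, since $A$ itself is such a map. I will carry out the +WBTM case; the -WBTM case follows by the dual argument, exchanging the roles of $P$ and $N$, of $\uparrow$ and $\downarrow$, and of $Min$ and $Max$.

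First I would record the two elementary identities that drive the whole argument. Since $A$ is total, $Min(X^A_P) \subseteq A^{-1}(P)$ and $Max(X^A_N) \subseteq A^{-1}(N)$, so that $W^A_P = Min(X^A_P)$ and $W^A_N = Max(X^A_N)$. Next, because $X$ is finite and $A$ is up-positive, $X^A_P$ is an up-set, and every element of a finite up-set sits above some minimal element of that set; hence $X^A_P = \uparrow Min(X^A_P)$. Dually, the down-negativity of $A$ yields $X^A_N = \downarrow Max(X^A_N)$.

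Now let $B \in \mathcal{W}_+(X,{\bf 2})$ satisfy $B_{|W} = \varphi = A_{|W}$. On $Min(X^A_P) \subseteq W$ we have $B = A = P$, and since $B$ is up-positive its positive set $X^B_P$ is an up-set containing $Min(X^A_P)$; therefore $X^A_P = \uparrow Min(X^A_P) \subseteq X^B_P$. Symmetrically, $B$ is down-negative and agrees with $A$ (value $N$) on $Max(X^A_N) \subseteq W$, whence $X^A_N = \downarrow Max(X^A_N) \subseteq X^B_N$. Finally, both $A$ and $B$ are total, so $X = X^A_P \cup^d X^A_N = X^B_P \cup^d X^B_N$; combined with the two inclusions just obtained, this forces $X^A_P = X^B_P$ and $X^A_N = X^B_N$, i.e. $B = A$. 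This establishes N2), so $W = N(A)$ is a core for $A$.

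The argument is short, and the single point to watch is the passage $X^A_P = \uparrow Min(X^A_P)$ (and its dual) — this is exactly where finiteness of $X$ is used, an up-set in an infinite poset need not be the up-closure of its minimal elements. It is also worth noting that the proof invokes only up-positivity, down-negativity and totality, never complemented positivity; thus $N(A)$ is already a core at the level of order-preserving total maps, the extra structure of $\mathcal{W}_+(X,{\bf 2})$ being what later allows the \emph{fundamental} core to be cut down further.
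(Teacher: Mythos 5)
Your proof is correct and takes essentially the same route as the paper's: both rest on the finiteness identities $X^A_P=\uparrow Min(X^A_P)$ and $X^A_N=\downarrow Max(X^A_N)$ (the paper writes them as $A^{-1}(P)=\bigcup_i \uparrow w_i$ and $A^{-1}(N)=\bigcup_j \downarrow v_j$) together with the up-positivity and down-negativity of the competing map, the only difference being that the paper argues pointwise by contradiction while you package the same reasoning as two set inclusions forced into equalities by totality. Your closing remark that complemented positivity is never used is accurate but also true of the paper's own argument, so it does not constitute a different approach.
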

\begin{proof}
Let $Min(A^{-1}(P))=\{ w_1,w_2, \cdots, w_k \}$ and $Max(A^{-1}(N))=\\ \{ v_1, \cdots, v_q \},$ hence $N(A)=\{ w_1, \cdots w_k,v_1, \cdots, v_q\}.$ We start observing that $A^{-1}(P)$ and $A^{-1}(N)$ are two anti-chains in $X,$ because they are, respectively, the minimal elements of $A^{-1}(P)$ and the maximal elements of $A^{-1}(N).$ Let $A'$ be an other +WBTM on $X$ such that $A'_{|N(A)}=A_{|N(A)}$ i.e. such that $A'(w_1)=\cdots=A'(w_k)=P$
and $A'(v_1)= \cdots = A'(v_q)=N.$ Suppose, by contradiction, that $A' \neq A.$ Then there exists an element $w\in X$ such that $A(w) \neq A(w').$
Then we have two possibilities:
\begin{itemize}
\item[$i)$] $A(w)=P$ and $A'(w)=N.$ In this case, $w \in A^{-1}(P)$ and hence, since $A^{-1}(P)=\cup_{i=1}^{k} (\uparrow w_i),$ there exists $i \in \{ 1, \cdots, k \}$ such that $w\in (\uparrow w_i),$ i.e. such that $w_i \le w.$ Since $A'(w_i)=P,$ we will have that $w_i \in (A')^{-1}(P)$ and since $A'$ is up-positive and $w_i \le w,$ it follows that $w \in (A')^{-1}(P)$ i.e. $A'(w)=P,$ and this is a contradiction. \\
\item[$ii)$] $A(w)=N$ and $A'(w)=P.$ In this case, $w \in A^{-1}(N),$ and hence, by $A^{-1}(N)=\cup_{j=1}^q (\downarrow v_j),$ there exists $j \in \{ 1, \cdots, q\}$ such that $w \in (\downarrow v_j),$ i.e. such that $w \le v_j.$ Since $v_j \in (A')^{-1}(N)$ and since $A'$ is down-negative, it follows that $w \in (A')^{-1}(N)$ or equivalently $A'(w)=N$ which is a contradiction.
\end{itemize}
Therefore $A=A',$ and hence $N(A)$ is a core for $A.$
\end{proof}

The next result shows that each +WBTM and each -WBTM have a unique minimal core and it also describes such a core.

\begin{theorem}\label{NA}
i) Let $A\in \mathcal{W}_+(X,{\bf 2})$. Then, setting
$$Core_{w+}(A)=N(A)\setminus [Max(A^{-1}(N))]^c,$$
it results that $Core_{w+}(A)$ is the $\mathcal{W}_+(X,{\bf 2})$-fundamental core for $A$.

ii) Let $A\in \mathcal{W}_-(X,{\bf 2})$. Then, setting
$$Core_{w-}(A)=N(A)\setminus [Min(A^{-1}(P))]^c,$$
it results that $Core_{w-}(A)$ is the $\mathcal{W}_-(X,{\bf 2})$-fundamental core for $A$.
\end{theorem}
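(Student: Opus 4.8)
The plan is to prove part i) in two stages matching the definition of a fundamental core — first that $W := Core_{w+}(A)$ is a core for $A$, then that $W \subseteq V$ for every core $V$ for $A$ — and to obtain part ii) by the order-dual argument. A preliminary observation streamlines everything: since $A$ is complemented positive, $v \in A^{-1}(N)$ forces $A(v^c)=P$, so a maximal negative can never be the complement of a maximal negative; hence $Max(A^{-1}(N)) \cap [Max(A^{-1}(N))]^c = \emptyset$. Consequently the subtraction in the definition of $Core_{w+}(A)$ only trims minimal positives, and
\[
W_N^A = Max(A^{-1}(N)), \qquad W_P^A = Min(A^{-1}(P)) \setminus [Max(A^{-1}(N))]^c .
\]

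For the first stage I would take an arbitrary $A' \in \mathcal{W}_+(X,{\bf 2})$ with $A'_{|W}=A_{|W}$ and show $A'=A$. On the negative side, $A'$ agrees with $A$ on $W_N^A = Max(A^{-1}(N))$; applying Corollary \ref{negug}-i) to the \emph{known} core $N(A)$ (Proposition \ref{primoNucleo}), whose negative part is exactly $Max(A^{-1}(N))$, gives $A^{-1}(N)=\downarrow Max(A^{-1}(N))$, so down-negativity of $A'$ yields $A'=N$ on all of $A^{-1}(N)$. On the positive side, complemented positivity of $A'$ then forces $A'=P$ on $[Max(A^{-1}(N))]^c=(W_N^A)^c$, while $A'=P$ on $W_P^A$ by agreement; up-positivity spreads this to $\uparrow W_P^A \cup \uparrow (W_N^A)^c$. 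It remains to identify this set with $X^A_P$: Corollary \ref{posug}-i) for the core $N(A)$ gives $X^A_P = \uparrow Min(A^{-1}(P)) \cup \uparrow [Max(A^{-1}(N))]^c$, and since every $m \in Min(A^{-1}(P))$ lies either in $W_P^A$ or in $[Max(A^{-1}(N))]^c=(W_N^A)^c$, one checks $\uparrow W_P^A \cup \uparrow (W_N^A)^c = X^A_P$. Hence $A'=P$ on $A^{-1}(P)$ too, so $A'=A$ and $W$ is a core.

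For the second stage, fix any core $V$ for $A$. Corollary \ref{maxneg}-i) gives $Max(A^{-1}(N)) \subseteq V_N^A \subseteq V$, so $W_N^A \subseteq V$. Next take $w \in W_P^A$, that is $w \in Min(A^{-1}(P))$ with $w \notin [Max(A^{-1}(N))]^c$; by Proposition \ref{minincl}-i) we have $w \in V_P^A \cup (V_N^A)^c$. If $w \in (V_N^A)^c$, write $w=u^c$ with $u \in V_N^A \subseteq A^{-1}(N)$; were $u$ not maximal in $A^{-1}(N)$, some $u'>u$ with $A(u')=N$ would give $(u')^c < w$ with $A((u')^c)=P$ by complemented positivity, contradicting $w \in Min(A^{-1}(P))$. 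Thus $u \in Max(A^{-1}(N))$ and $w=u^c \in [Max(A^{-1}(N))]^c$, contrary to the choice of $w$. Therefore $w \in V_P^A \subseteq V$, whence $W \subseteq V$ and $W$ is the fundamental core.

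The crux — and the reason $[Max(A^{-1}(N))]^c$ is exactly the right set to delete — is the final claim that a minimal positive which is the complement of \emph{some} negative is forced to be the complement of a \emph{maximal} negative. This is precisely where minimality meets complemented positivity, and it guarantees simultaneously that the deleted minimal positives are genuinely recoverable from $W_N^A$ and that nothing more may be removed. Part ii) follows by the dual substitutions $P \leftrightarrow N$, $\uparrow \leftrightarrow \downarrow$, $Min \leftrightarrow Max$, up-positive $\leftrightarrow$ down-negative and complemented positive $\leftrightarrow$ complemented negative, invoking the ii)-parts of Proposition \ref{minincl}, Proposition \ref{primoNucleo} and Corollaries \ref{negug}, \ref{posug}, \ref{maxneg}.
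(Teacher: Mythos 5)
Your proposal is correct and takes essentially the same route as the paper's: the same two-stage structure (core-ness first, then minimality via Corollary \ref{maxneg}-$i)$ and Proposition \ref{minincl}-$i)$, with the key fact --- Proposition \ref{minimax}-$i)$ in the paper, which you reprove inline --- that a minimal positive whose complement is negative must be the complement of a \emph{maximal} negative, so that nothing in $W^A_P$ can lie in $(V^A_N)^c$). The only difference is presentational: in the core-ness stage you determine $A'$ globally, propagating $N$ down from $Max(A^{-1}(N))$, then using complemented positivity to force $P$ on $[Max(A^{-1}(N))]^c$ and up-positivity to cover $X^A_P$ (invoking Corollaries \ref{negug}-$i)$ and \ref{posug}-$i)$ for the known core $N(A)$ of Proposition \ref{primoNucleo}), whereas the paper argues pointwise by contradiction after re-indexing $w_1=v_1^c,\dots,w_p=v_p^c$ --- but the underlying mechanism, in particular the recovery of a deleted minimal positive $w_i=v_i^c$ via $w^c\le v_i$, down-negativity and complemented positivity, is identical.
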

\begin{proof}
$i)$ Let $Min(A^{-1}(P))=\{ w_1, \cdots , w_k \}$ and $Max(A^{-1}(N))=\{ v_1, \cdots , v_q \};$ then $\{ v_1^c, \cdots, v_q^c \} \subseteq A^{-1}(P).$
If $\{ v_1^c, \cdots, v_q^c \} \cap \{ w_1, \cdots , w_k \}= \emptyset$ then $Core_{w+}(A)$ coincide with $N(A)$ and hence the assertion follows by Proposition \ref{primoNucleo}.

If $\{ v_1^c, \cdots, v_q^c \} \cap \{ w_1, \cdots, w_k \} \neq \emptyset$ we assume, without loss of generality, that

$\{ v_1^c, \cdots , v_q^c \} \cap \{w_1, \cdots , w_k \}=\{ w_1, \cdots , w_p \}$, for some $p$ such that $1 \le p \le \min(k,q).$

Re-ordering the indexes, we can assume that
$w_1=v_1^c, \cdots ,w_p=v_p^c.$ Then we have that:
$$Core_{w+}(A)=\{ w_{p+1}, \cdots , w_k, v_1, \cdots, v_p,v_{p+1}, \cdots , v_q \}.$$
Observe that $Core_{w+}(A) \cap A^{-1}(P) = \{ w_{p+1}, \cdots , w_k \}$ is an anti-chain in $X,$ because it is a subset of the anti-chain $Min(A^{-1}(P));$ $Core_{w+}(A) \cap A^{-1}(N)= \{ v_1 , \cdots, v_q \}$ is an anti-chain in $X$ because it coincides with the anti-chain $Max(A^{-1}(N)).$ Let $A'$ be an other +WBTM on $X$ and suppose that $A'(w_{p+1})=\cdots =A'(w_k)=P,$ and $A'(v_1)=\cdots =A'(v_q)=N.$
We need to prove that $A'=A$ on all $X.$ Suppose by contradiction that $A \neq A'$ on $X,$ then there exists an element $w \in X$ such that $A(w) \neq A'(w).$ First suppose that $A'(w)=P$ and $A(w)=N.$ In this case, $w \in A^{-1}(N)$ and hence by $A^{-1}(N)=\cup_{j=1}^q (\downarrow v_j)$ there exists $j \in \{ 1, \cdots q \}$ such that $w \in \downarrow v_j$ i.e. such that $w \le v_j.$ Since $v_j \in (A')^{-1}(N)$ and since $A'$ is down-negative, by $w \le v_j$ it follows that $w \in (A')^{-1}(N)$ i.e. $A'(w)=N,$ which is a contradiction.\\
Finally suppose that $A'(w)=N$ and $A(w)=P.$ In this case, since $w \in A^{-1}(P)$ and since $A^{-1}(P)=\cup_{i=1}^k (\uparrow w_i)$ there exists $i \in \{ 1, \cdots, k \}$ such that $w \in \uparrow w_i,$ i.e. such that $w_i \le w.$ We distinguish two cases:
\begin{itemize}
\item[$j_1)$] if $i \in \{ p+1, \cdots, k \},$ then $w_i \in (A')^{-1}(P)$ and since $A'$ is up-positive, by $w_i \le w$ it follows that $w \in (A')^{-1}(P)$ i.e. $A'(w)=P,$ and this is a contradiction. \\
\item[$j_2)$] if $i \in \{ 1, \cdots , p \},$ then we will have $w_i=v_i^c.$ Since $w_i \le w,$ we will have that $w^c \le w_i^c =(v_i^c)^c=v_i;$ since $v_i \in (A')^{-1}(N)$ and $A'$ is down-negative, it follows that $w^c \in (A')^{-1}(N).$ Therefore since $A'$ is complemented positive, $w=(w^c)^c \in (A')^{-1} (P),$ i.e. $A'(w)=P,$ which is a contradiction.
\end{itemize}
This shows that $Core_{w+}(A)$ is a core for $A$ on $X$.

Let now $W$ be a core for $A$ on $X$. At first we observe that

 $$Core_{w+}(A)=[Min(X^A_P) \setminus (Max(X^A_N))^c] \cup^d Max(X^A_N).$$

Moreover, by Proposition \ref{minincl}-$i)$ we have that
\begin{equation} \label{eq1}
Min(X^A_P) \subseteq W^A_P \cup (W^A_N)^c,
\end{equation}
and by Corollary \ref{maxneg}-$i)$ we have that
\begin{equation} \label{eq2}
Max(X^A_N) \subseteq W^A_N.
\end{equation}
Therefore, to show that $Core_{w+}(A) \subseteq W$, by (\ref{eq2}) it is sufficient to prove that
$$H=Min(X^A_P) \setminus (Max(X^A_N)^c) \subseteq W^A_P.$$
Since $H \subseteq Min(X^A_P),$ by (\ref{eq1}) it is sufficient to prove that $H \cap (W^A_N)^c = \emptyset.$ Suppose on the contrary that
there exists $w \in H$ such that $w \in (W^A_N)^c.$ In this case there exists $\tilde{w} \in W^A_N$ such that $\tilde{w}^c=w.$ It follows that $w$ is a minimal positive of $A$ ($w \in H$) such that $w^c=\tilde{w}$ is negative for $A,$ hence by Proposition \ref{minimax}-$i)$ it follows that $\tilde{w} \in Max(X^A_N)$ and therefore $w=\tilde{w}^c \in Max(X^A_N)^c,$ which is a contradiction since $w \in H.$

$ii)$ Similarly using Proposition \ref{primoNucleo}, Proposition \ref{minincl}-$ii)$, Corollary \ref{maxneg}-$ii)$ and Proposition \ref{minimax}-$ii)$
\end{proof}

\section{Essential properties of a $\mathcal{W}_{\pm}(X,{\bf 2})$-fundamental core}

In this section we determine the properties characterizing the fundamental core of a WBTM. This will lead us to define the concepts of $w_+-${\it basis} and $w_{-}-${\it basis} for $X$. At the end of the section we will show that each $w_+-$basis identifies uniquely the fundamental core of a +WBTM on $X$ and each $w_--$basis identifies uniquely the fundamental core of a -WBTM on $X$.
In all this section, $X$ will denote a finite SIP.
\begin{definition}
i) A $w_+-$basis for $X$ is an ordered couple $\langle Y_+ | Y_{-} \rangle$, where $Y_{+}$ and $Y_{-}$ are two disjoint anti-chains of $X$ such that:
\begin{itemize}
\item[B1+)] $(\downarrow Y_{+}) \cap (Y_{-}^c) = \emptyset;$ \\
\item[B2+)] $(\uparrow Y_{+} \cup \uparrow (Y_{-})^c) \cap \downarrow Y_{-} = \emptyset;$ \\
\item[B3+)] $X=(\uparrow Y_{+} \cup \uparrow (Y_{-})^c) \cup \downarrow Y_{-}.$\\
\end{itemize}

ii) A $w_--$basis for $X$ is an ordered couple $\langle Y_+ | Y_{-} \rangle$, where $Y_{+}$ and $Y_{-}$ are two disjoint anti-chains of $X$ such that:
\begin{itemize}
\item[B1-)] $(\uparrow Y_{-}) \cap (Y_{+}^c) = \emptyset;$ \\
\item[B2-)] $(\downarrow Y_{-} \cup \downarrow (Y_{+})^c) \cap \uparrow Y_{+} = \emptyset;$ \\
\item[B3-)] $X=(\downarrow Y_{-} \cup \downarrow (Y_{+})^c) \cup \uparrow Y_{+}.$\\
\end{itemize}
\end{definition}

Two $w_+$-bases [$w_-$-bases] $\langle Y_+ | Y_{-} \rangle$ and $\langle Y'_+ | Y'_{-} \rangle$ are considered equal if $Y_+ = Y'_+$ and $Y_- = Y'_-$.

\begin{proposition}\label{Ker(A)basis}
i) If $A\in \mathcal{W}_+(X,{\bf 2})$  and if $W=Core_{w+}(A)$, then $\langle W^A_P | W^A_N \rangle$ is a $w_+-$basis for $X$.

ii) If $A\in \mathcal{W}_-(X,{\bf 2})$  and if $W=Core_{w-}(A)$, then $\langle W^A_P | W^A_N \rangle$ is a $w_--$basis for $X$.
\end{proposition}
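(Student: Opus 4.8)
The plan is to take $Y_+ = W^A_P$ and $Y_- = W^A_N$ and to verify the three defining conditions of a $w_+$-basis, relying on the explicit description of the fundamental core produced in Theorem \ref{NA}. From the proof of that theorem we have
$$Core_{w+}(A)=[Min(X^A_P) \setminus (Max(X^A_N))^c] \cup^d Max(X^A_N),$$
so that $W^A_P = Min(X^A_P) \setminus (Max(X^A_N))^c$ and $W^A_N = Max(X^A_N)$. First I would record the easy structural facts. Since $Max(X^A_N)$ is a set of maximal elements it is an anti-chain, and $W^A_P$ is an anti-chain because it is a subset of the anti-chain $Min(X^A_P)$. Disjointness of $Y_+$ and $Y_-$ is immediate, as $W^A_P \subseteq A^{-1}(P)$ and $W^A_N \subseteq A^{-1}(N)$.

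Next I would dispatch conditions B2+) and B3+) together, since both are immediate consequences of Theorem \ref{generators}-i). That theorem gives the disjoint decomposition
$$X = [\uparrow W^A_P \cup \uparrow((W^A_N)^c)] \cup^d \downarrow W^A_N,$$
and substituting $Y_+ = W^A_P$, $Y_- = W^A_N$ turns this into $X = (\uparrow Y_+ \cup \uparrow (Y_-)^c) \cup^d \downarrow Y_-$. The fact that this union covers $X$ is precisely B3+), while the disjointness of the two pieces is precisely B2+).

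The real work lies in B1+), i.e.\ in proving $(\downarrow Y_+) \cap (Y_-^c) = \emptyset$; this is the step I expect to be the main obstacle. I would argue by contradiction: assume there is $x \in (\downarrow Y_+) \cap (Y_-^c)$. Being in $Y_-^c = (W^A_N)^c$, we may write $x = v^c$ with $v \in W^A_N \subseteq A^{-1}(N)$; since $A$ is complemented positive this forces $A(x) = A(v^c) = P$. Being in $\downarrow Y_+$, there is $w \in Y_+ \subseteq Min(X^A_P)$ with $x \le w$; as $x \in A^{-1}(P)$ and $w$ is a minimal positive, minimality yields $x = w$. Thus $w = x \in Y_-^c$, contradicting the very construction $Y_+ = Min(X^A_P) \setminus (Max(X^A_N))^c = Min(X^A_P) \setminus Y_-^c$. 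This contradiction establishes B1+), and the three conditions together show that $\langle W^A_P \mid W^A_N \rangle$ is a $w_+$-basis.

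Finally, part ii) follows by the dual argument: one replaces Theorem \ref{generators}-i) and the complemented-positive property by their complemented-negative counterparts (Theorem \ref{generators}-ii) and the clause of Theorem \ref{NA}-ii) describing $Core_{w-}(A)$), interchanging the roles of $\uparrow$ and $\downarrow$, of $Min$ and $Max$, and of $P$ and $N$ throughout.
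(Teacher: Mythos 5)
Your proof is correct and follows essentially the same route as the paper's: both read off $W^A_P = Min(X^A_P)\setminus (Max(X^A_N))^c$ and $W^A_N = Max(X^A_N)$ from Theorem \ref{NA}, obtain B2+) and B3+) at once from the disjoint decomposition of Theorem \ref{generators}-$i)$, and prove B1+) by contradiction using complemented positivity together with the minimality of elements of $Min(X^A_P)$. The only cosmetic difference is that the paper splits its B1+) argument into the cases $z = x$ and $z < x$, whereas you use minimality to force equality and then contradict the set difference defining $W^A_P$; the two are the same argument.
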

\begin{proof}
$i)$ By definition of $Core_{w+}(A)$, we have that $W^A_P = Min(X^A_P) \setminus (Max(X^A_N))^c$ and $W^A_N = Max(X^A_N)$.
By Theorem \ref{NA}-$i)$ we know that $W$ is a core for $A$, therefore, by Theorem \ref{generators}-$i)$, we have
$X=(\uparrow W_P^A \cup \uparrow(W_N^A)^c) \cup^d \downarrow W^A_N$.

Moreover, since the elements of $W^A_P$ are a part of the minimal positives of $A$ and the elements of $W^A_N$ are all the maximal negatives of $A$, it follows that $W^A_P$ and $W^A_N$ are two disjoint anti-chains of $X$. It will remain to prove that $\downarrow W^A_P \cap (W^A_N)^c = \emptyset$.
Since $W^A_P = Min(X^A_P) \setminus (W^A_N)^c$, then $W^A_P \cap (W^A_N)^c = \emptyset$. Let us suppose now by contradiction that there exists an element $z \in \downarrow W^A_P \cap (W^A_N)^c$. This implies the existence of an element $x \in W^A_P$ such that $z \le x$. Since $W^A_P \cap (W^A_N)^c = \emptyset$, we have that $z < x$ (if $z=x$, then $x \in W^A_P \cap (W^A_N)^c = \emptyset$). Since $x$ is a minimal positive of $A$ and $A(z)=P$ (because $z \in (W^A_N)^c$), this is a contradiction.

$ii)$ Similar arguments apply using Theorem \ref{NA}-$ii)$ and Theorem \ref{generators}-$ii)$.

\end{proof}

The following proposition will be essential in \cite{chias-mar-nardi}:
\begin{proposition}\label{basisKer(A)}
$i)$ Let $\langle W_+ | W_{-} \rangle$ be a $w_+-$basis for $X$. If we set $W = W_+ \cup^d W_ -$ and
$$A(x)=\left\{ \begin{array}{lll}
                 P & \textrm{if} & x \in \uparrow (W_+) \cup \uparrow (W_{-}^c) \\
                 N & \textrm{if} & x \in \downarrow W_{-}
                 \end{array} \right.$$
then $A$ is a +WBTM on $X$ and $W = Core_{w+}(A)$.

ii) Let $\langle W_+ | W_{-} \rangle$ be a $w_--$basis for $X$. If we set $W = W_+ \cup^d W_ -$ and
$$A(x)=\left\{ \begin{array}{lll}
                 P & \textrm{if} & x \in \uparrow (W_+) \\
                 N & \textrm{if} & x \in \downarrow W_{-} \cup \downarrow (W_{+}^c)
                 \end{array} \right.$$
then $A$ is a -WBTM on $X$ and $W = Core_{w-}(A)$.
\end{proposition}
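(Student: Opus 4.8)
The plan is to verify, for part i), three things in sequence: that the prescription defines a genuine total map, that this map is a +WBTM, and finally that its fundamental core is exactly $W = W_+ \cup^d W_-$. Part ii) is entirely dual, obtained by interchanging $\uparrow$ with $\downarrow$, $P$ with $N$, and $W_+$ with $W_-$, and invoking the ``minus'' versions of the cited results; so I would only write out i) in detail.

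First I would check that $A$ is well-defined and total. The two clauses assign $P$ on $\uparrow W_+ \cup \uparrow(W_-^c)$ and $N$ on $\downarrow W_-$. By $B3+)$ these two sets cover $X$, which gives totality, and by $B2+)$ they are disjoint, which gives single-valuedness; hence $A^{-1}(P) = \uparrow W_+ \cup \uparrow(W_-^c)$ and $A^{-1}(N) = \downarrow W_-$. Being a union of up-sets, $A^{-1}(P)$ is itself an up-set, so $A$ is up-positive, and by Proposition \ref{equivopup} it is then also order-preserving and down-negative. For complemented positivity I would argue directly with the involution: if $x \in \downarrow W_-$, say $x \le w$ with $w \in W_-$, then $w^c \le x^c$ by $I2)$, so $x^c \in \uparrow(W_-^c) \subseteq A^{-1}(P)$; thus $(A^{-1}(N))^c = (\downarrow W_-)^c \subseteq A^{-1}(P)$. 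Therefore $A$ is a +WBTM on $X$.

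It remains to identify the fundamental core. Recalling from the proof of Theorem \ref{NA}-i) the decomposition $Core_{w+}(A) = [Min(X^A_P) \setminus (Max(X^A_N))^c] \cup^d Max(X^A_N)$, I would compute the two pieces separately. Since $W_-$ is an anti-chain and $X^A_N = \downarrow W_-$, the maximal elements of this down-set are exactly the generators, so $Max(X^A_N) = W_-$. For the positive part I would use the elementary fact that in a finite poset $Min(\uparrow S) = Min(S)$ for any subset $S$; applied to $S = W_+ \cup W_-^c$ this yields $Min(X^A_P) = Min(W_+ \cup W_-^c)$.

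The crux is the identity $Min(W_+ \cup W_-^c) \setminus W_-^c = W_+$, and this is where $B1+)$ is indispensable. If some $w \in W_+$ had an element $y < w$ with $y \in W_+ \cup W_-^c$, then $y \notin W_+$ because $W_+$ is an anti-chain, so $y \in W_-^c$, whence $y \in (\downarrow W_+) \cap W_-^c$, contradicting $B1+)$; this shows $W_+ \subseteq Min(W_+ \cup W_-^c)$, and since $B1+)$ also forces $W_+ \cap W_-^c = \emptyset$, we get $W_+ \subseteq Min(X^A_P) \setminus W_-^c$. Conversely, any element of $Min(W_+ \cup W_-^c)$ lying outside $W_-^c$ must belong to $W_+$. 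Combining the two parts gives $Core_{w+}(A) = W_+ \cup^d W_- = W$. The main obstacle is precisely this last identity, namely correctly extracting $W_+$ as the part of the minimal positives of $A$ that is not already supplied by the complements of the maximal negatives; everything else is routine bookkeeping with up-sets, down-sets, and the involution $c$.
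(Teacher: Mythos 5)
Your proof is correct, and its first half --- totality from $B3+)$, single-valuedness from $B2+)$, up-positivity of $A^{-1}(P)$ as a union of up-sets, complemented positivity via $(\downarrow W_{-})^c = \uparrow(W_{-}^c)$ --- coincides with the paper's. Where you genuinely depart is in the identification $W = Core_{w+}(A)$. The paper proceeds indirectly: it first proves that $W$ is a core by a three-case uniqueness-of-extension argument (any +WBTM $B$ agreeing with $A$ on $W$ is forced to agree with it on $\uparrow W_{+}$, on $\downarrow W_{-}$, and on $\uparrow(W_{-}^c)$), then deduces $Core_{w+}(A) \subseteq W$ from Theorem \ref{NA}-$i)$, shows $W^A_P \subseteq Min(X^A_P)$ using Corollary \ref{posug}-$i)$ and $B1+)$, and finally excludes strict inclusion by a cardinality contradiction resting on Proposition \ref{minmin}-$i)$ and Corollary \ref{maxneg2}-$i)$. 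You instead evaluate the explicit formula $Core_{w+}(A) = [Min(X^A_P) \setminus (Max(X^A_N))^c] \cup^d Max(X^A_N)$ head-on: $Max(X^A_N) = Max(\downarrow W_{-}) = W_{-}$ since $W_{-}$ is an anti-chain, $Min(X^A_P) = Min(W_{+} \cup W_{-}^c)$ via the identity $Min(\uparrow S) = Min(S)$ (which indeed holds in any poset), and $Min(W_{+} \cup W_{-}^c) \setminus W_{-}^c = W_{+}$ via $B1+)$ together with the anti-chain hypothesis; all three computations are sound, including the implicit step $W_{+} \cap W_{-}^c = \emptyset$, which also follows from $B1+)$. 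Your route is shorter and buys economy: core-ness of $W$ comes for free, since the fundamental core is in particular a core for $A$, and you bypass both the extension-uniqueness case analysis and the counting argument, never needing Proposition \ref{minmin} or Corollary \ref{maxneg2}. What the paper's longer route exhibits, and yours outsources entirely to Theorem \ref{NA}, is the explicit rigidity mechanism showing how the basis data pins down $A$ among all +WBTM's --- the same three-case mechanism that reappears in the injectivity half of Theorem \ref{gBiunivoca}. One caveat worth flagging: the decomposition of $Core_{w+}(A)$ you quote appears inside the proof of Theorem \ref{NA}-$i)$ rather than in its statement, which defines $Core_{w+}(A) = N(A) \setminus [Max(A^{-1}(N))]^c$; the two expressions agree because for a complemented-positive map no maximal negative can be the complement of a maximal negative, a one-line observation you should include if you cite only the statement of Theorem \ref{NA}.
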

\begin{proof}
$i)$ Let us observe that $A$ is well defined because $( \uparrow (W_+) \cup \uparrow (W_{-}^c)) \cap \downarrow W_{-} = \emptyset$; moreover $W_+ = W^A_P$, $W_- = W^A_N$ by definition of $A$.
Now we prove that $A$ is up-positive and complemented positive. Indeed, since
$A^{-1}(P)=\uparrow W_{+} \cup \uparrow W_{-}^c$ is a union of two up-sets, it is also an up-set. Furthermore, since $(A^{-1}(N))^c=(\downarrow W_{-})^c=\uparrow(W^c_{-}) \subseteq A^{-1} (P),$ it follows that $A$ is complemented positive. Hence $A$ is a +WBTM.
Suppose now that $B$ is another +WBTM on $X$ such that $B_{|W}=A_{|W}.$ We need to prove that $B=A$ on all $X.$ Suppose on the contrary that there exists $w \in X$ such that $B(w) \neq A(w).$
\begin{itemize}
\item[1)] Suppose that $w \in \uparrow W_{+}.$ In this case, $A(w)=P,$ hence it holds $B(w)=N.$ Since $w\in \uparrow W_{+},$ there exists $\tilde{w} \in W_{+}$ such that $\tilde{w} \le w;$ by hypothesis $B$ and $A$ coincides on $W$ and hence $B(\tilde{w})=A(\tilde{w})=P.$ Since $B$ is a +WBTM, it follows that $B$ is up-positive and hence (since $\tilde{w} \le w$)
we have that $B(w)=P$ and this is a contradiction.\\
\item[2)] Suppose that $w \in \downarrow W_{-}.$ In this case, $A (w)=N,$ and hence $B(w)=P.$ Since $w\in \downarrow W_{-},$ there exists $\tilde{w} \in W_{-}$ such that $w \le \tilde{w};$ by hypothesis $B(\tilde{w})=A(\tilde{w})=N.$ Since $B$ is down-negative, with $w \le \tilde{w},$ we have that $B(w)=N.$ This is a contradiction.\\
\item[3)] Suppose that $w \in \uparrow(W^c_{-}).$ In this case, $A(w)=P,$ and hence $B(w)=N.$ Since $w \in \uparrow(W^c_{-}),$
there exists $\tilde{w} \in W^c_{-}$ such that $\tilde{w} \le w.$ Since $\tilde{w} \in W^c_{-},$ there exists $\overline{w} \in W_{-}$ such that $\tilde{w}=\overline{w}^c,$ and hence $B(\overline{w})=A(\overline{w})=N.$ Since $A$ and $B$ are complemented positive, it follows that $B(\overline{w}^c)=A(\overline{w}^c)=P,$ i.e. $B(\tilde{w})=P.$ Now since $\tilde{w} \le w$ and $B$ is up-positive, it follows that $B(w)=P,$ and this is a contradiction.
\end{itemize}
\medskip
Hence $W$ is a core for $A$. By Theorem \ref{NA}-$i)$ it follows then that $Core_w(A) \subseteq W$.

We prove now that \begin{equation} \label{WMin}
W^A_P \subseteq Min(X^A_P).
\end{equation}

Suppose that $w \in W^A_P$ and that by contradiction $w \not\in Min(X^A_P)$.

 Then there exists $\overline{w} \in X^A_P$ such that $\overline{w} < w.$ By Corollary \ref{posug} $i)$ we have that $\overline{w} \in \uparrow W^A_P$ or $\overline{w} \in \uparrow (W_N^A)^c.$

$1)$ if $\overline{w} \in \uparrow W^A_P,$ there exists $\tilde{w} \in W^A_P$ such that $\tilde{w} \le \overline{w}$ and hence we will have that $\tilde{w} < \overline{w} \le w,$ with $\tilde{w},w \in W^A_P,$ against the hypothesis that $W^A_P$ is an anti-chain.

$2)$ if $\overline{w} \in \uparrow (W^A_N)^c,$ there exists $\tilde{w} \in (W_N^A)^c$ such that $\tilde{w} \le \overline{w}$ and hence $\tilde{w} \le \overline{w} < w,$ with $\tilde{w} \in (W^A_N)^c$ and $w \in W^A_P,$ against the $B1+)$ and the hypothesis that $\langle W^A_P | W^A_N \rangle$ is a $w-$basis for $X$.

This proves (\ref{WMin}).

Let us suppose now that $Core_{w+}(A) \neq W$. Since $Core_{w+}(A) \subseteq W$, this implies that $Core_{w+}(A) \varsubsetneqq W$, and hence that $|Core_{w+}(A)|<|W|$.
Let $\tilde{W}=Core_{w+}(A).$

Since $W$ and $\tilde{W}$ are both two cores for $A$, by Proposition \ref{minmin}-$i)$ we know that
$$Min(W^A_P \cup^d (W^A_N)^c)=Min(X^A_P)=Min(\tilde{W}^A_P \cup (\tilde{W}^A_N)^c).$$
Since $\langle W^A_P | W^A_N \rangle$ is a $w-$basis for $X$, we have that $W^A_N$ is an anti-chain, moreover, by definition of $Core_{w+}(A)$, also $\tilde{W}^A_N$ is an anti-chain; by Corollary \ref{maxneg2} i), then it follows that $W^A_N=Max(X^A_N)=\tilde{W}^A_N$, and hence $(W^A_N)^c=(\tilde{W}^A_N)^c.$ Since $W=W^A_P \cup^d W^A_N$ and $\tilde{W}=\tilde{W}^A_P \cup^d \tilde{W}^A_N,$ by the equality $W^A_N=\tilde{W}^A_N$ and by the inequality $|\tilde{W}|< |W|,$ it follows that
\begin{equation} \label{*1}
|\tilde{W}^A_P|<|W^A_P|.
\end{equation}
By (\ref{WMin}) we have that
\begin{equation}\label{*2}
W^A_P \subseteq Min(X^A_P)=Min(W^A_P \cup^d (W^A_N)^c)=Min(\tilde{W}^A_P \cup (\tilde{W}^A_N)^c).
\end{equation}
Since $W^A_P \cap (\tilde{W}^A_N)^c = W^A_P \cap (W^A_N)^c = \emptyset$ in view of the fact that $\langle W^A_P | W^A_N \rangle$ is a $w_+-$basis for $X$, by (\ref{*2}) it follows that $W^A_P \subseteq \tilde{W}^A_P$, and hence
$|W^A_P| \le |\tilde{W}^A_P|$, that is in contradiction with (\ref{*1}). This proves that $W=Core_{w+}(A)$.

$ii)$ The same reasoning applies using Theorem \ref{NA}-$ii)$, Corollary \ref{posug}-$ii)$ and Proposition \ref{minmin}-$ii)$.

\end{proof}

We denote now with $\mathcal{B}_{w+}(X)$ the family of all $w_+-$bases on $X$ and with $\mathcal{B}_{w-}(X)$ the family of all $w_--$bases on $X$. If $A \in \mathcal{W}_+(X,{\bf 2})$, by Proposition \ref{Ker(A)basis}-$i),$ it follows that
$\langle W^A_P | W^A_N \rangle \in \mathcal{B}_{w+}(X)$, where $W = Core_{w+}(A)$. This defines an application $h_+ : \mathcal{W}_+(X,{\bf 2}) \to \mathcal{B}_{w+}(X)$ such that $h_+(A) = \langle W^A_P | W^A_N \rangle$, where $W = Core_{w+}(A)$.

If $A \in \mathcal{W}_-(X,{\bf 2})$, by Proposition \ref{Ker(A)basis}-$ii)$ we can define a similar map $h_- : \mathcal{W}_-(X,{\bf 2}) \to \mathcal{B}_{w-}(X).$
It holds then the following result.

\begin{theorem}\label{gBiunivoca}
The maps $h_+$ and $h_-$ are bijective.
\end{theorem}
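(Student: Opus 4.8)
The plan is to exhibit an explicit two-sided inverse for $h_+$ (the argument for $h_-$ being entirely symmetric). Proposition \ref{basisKer(A)} already provides the raw material: to a $w_+$-basis $\langle W_+ | W_- \rangle$ it associates the map $A$ defined by $A(x)=P$ on $\uparrow W_+ \cup \uparrow(W_-^c)$ and $A(x)=N$ on $\downarrow W_-$, and it guarantees that this $A$ is a $+$WBTM with $W_+ \cup^d W_- = Core_{w+}(A)$. I would therefore define a map $g_+ : \mathcal{B}_{w+}(X) \to \mathcal{W}_+(X,{\bf 2})$ by $g_+(\langle W_+ | W_- \rangle)=A$, and check that $g_+$ and $h_+$ are mutually inverse.

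First I would verify $h_+ \circ g_+ = \mathrm{id}_{\mathcal{B}_{w+}(X)}$. Fixing a $w_+$-basis $\langle W_+ | W_- \rangle$ and putting $A=g_+(\langle W_+ | W_- \rangle)$, Proposition \ref{basisKer(A)}-$i)$ tells us that $W:=W_+ \cup^d W_- = Core_{w+}(A)$ and, by the very definition of $A$, that $W_+ = W^A_P$ and $W_- = W^A_N$. Hence $h_+(A)=\langle W^A_P | W^A_N \rangle = \langle W_+ | W_- \rangle$, which is exactly the assertion $h_+(g_+(\langle W_+ | W_- \rangle))=\langle W_+ | W_- \rangle$.

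Next I would verify $g_+ \circ h_+ = \mathrm{id}_{\mathcal{W}_+(X,{\bf 2})}$. Given $A\in \mathcal{W}_+(X,{\bf 2})$, set $W=Core_{w+}(A)$, so that $h_+(A)=\langle W^A_P | W^A_N \rangle$. Applying $g_+$ produces the map $A'$ with $A'(x)=P$ precisely on $\uparrow W^A_P \cup \uparrow(W^A_N)^c$ and $A'(x)=N$ precisely on $\downarrow W^A_N$. But Corollary \ref{posug}-$i)$ gives $X^A_P = \uparrow W^A_P \cup \uparrow(W^A_N)^c$ and Corollary \ref{negug}-$i)$ gives $X^A_N = \downarrow W^A_N$; thus $A'(x)=P \Leftrightarrow x\in X^A_P \Leftrightarrow A(x)=P$ and likewise for $N$, so $A'=A$ on all of $X$. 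This shows $g_+(h_+(A))=A$.

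Having both compositions equal to the identity, $h_+$ is a bijection with inverse $g_+$. The map $h_-$ is handled identically, invoking Proposition \ref{basisKer(A)}-$ii)$ together with Corollaries \ref{posug}-$ii)$ and \ref{negug}-$ii)$. I do not expect a genuine obstacle here, since Propositions \ref{Ker(A)basis} and \ref{basisKer(A)} already carry all the structural weight; the only point requiring care is to ensure that the map built from $\langle W^A_P | W^A_N \rangle$ in the second composition is \emph{literally} $A$, and not merely some other element of $\mathcal{W}_+(X,{\bf 2})$ sharing the same core. This is exactly what Corollaries \ref{negug}-$i)$ and \ref{posug}-$i)$ settle, by pinning down $X^A_P$ and $X^A_N$ from the core data alone.
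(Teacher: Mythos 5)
Your proof is correct, and it packages the argument somewhat differently from the paper's. The surjectivity half coincides: both you and the authors derive it from Proposition \ref{basisKer(A)}-$i)$, which builds from a $w_+$-basis a +WBTM whose fundamental core realizes that basis (the proof of that proposition already records $W_+ = W^A_P$ and $W_- = W^A_N$, which is exactly what your identity $h_+\circ g_+ = \mathrm{id}$ needs). Where you diverge is injectivity. The paper argues directly from the defining property of a core: if $h_+(A)=h_+(B)$, then $A$ and $B$ agree on $\tilde{W} = Core_{w+}(B)$, and since $\tilde{W}$ is a core for $B$, uniqueness of the extension forces $A=B$ --- no further machinery. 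You instead exhibit an explicit inverse $g_+$ and verify $g_+\circ h_+=\mathrm{id}$ by reconstructing $A$ from its core data via Corollaries \ref{negug}-$i)$ and \ref{posug}-$i)$, which pin down $X^A_N = \downarrow W^A_N$ and $X^A_P = \uparrow W^A_P \cup \uparrow (W^A_N)^c$. Your route invokes heavier lemmas than strictly necessary for injectivity, but it buys something the paper's proof leaves implicit: a closed formula for $h_+^{-1}$, namely that the whole boolean map is recovered from its basis by the up-set/down-set prescription of Proposition \ref{basisKer(A)}. Two small points are worth making explicit in your write-up: applying $g_+$ to $h_+(A)$ presupposes that $\langle W^A_P \,|\, W^A_N\rangle$ is a $w_+$-basis, which is Proposition \ref{Ker(A)basis}-$i)$, and invoking Corollaries \ref{negug} and \ref{posug} presupposes that $Core_{w+}(A)$ is indeed a core for $A$, which is Theorem \ref{NA}-$i)$; both are available, so the argument stands as given.
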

\begin{proof}
The map $h_+$ is onto by virtue of Proposition \ref{basisKer(A)}-$i)$. We prove now that $h_+$ is a one-to-one map.

Let $A$ and $B$ be two +WBTM's on $X$ such that $\langle W^A_P | W^A_N \rangle = \langle \tilde{W}^B_P | \tilde{W}^B_N \rangle$, where $W = Core_{w+}(A)$ and $\tilde{W} = Core_{w+}(B)$.
This means that $W^A_P=\tilde{W}^B_P$ and $W^A_N=\tilde{W}^B_N$.
Then, if $w \in \tilde{W}^B_P,$ we have that $B(w) = P$ and also $A(w) = P$; similarly, if $w\in \tilde{W}^B_N$, we have that $B(w)=N$ and $A(w)=N$.
Therefore $A_{|_{\tilde{W}}} = B_{|_{\tilde{W}}}$. Since $\tilde{W}$ is a core for $B$, it follows that $A=B$.

The case of $h_-$ is analogue.
\end{proof}

\section{Applications to the $(n,r)$-systems}

In this section we apply the previous general results to the case $X=S(n,r)$. We set

$$\mathcal{OP}(n,r)=\mathcal{OP}(S(n,r),{\bf 2}) \cap \mathcal{BT}(n,r),$$

$$\mathcal{W_+}(n,r)=\{A\in \mathcal{W_+}(S(n,r),{\bf 2}) \cap \mathcal{BT}(n,r)\ : A(r\cdots21|12\cdots(n-r)) = P\},$$

$$\mathcal{W_-}(n,r)=\{A\in \mathcal{W_-}(S(n,r),{\bf 2}) \cap \mathcal{BT}(n,r)\ : A(r\cdots21|12\cdots(n-r)) = N\}.$$

The family $\mathcal{W_+}(n,r)$ satisfies the hypotheses of p.l.c. and the family $\mathcal{W_-}(n,r)$ satisfies the hypotheses of n.l.c., i.e:

$$\chi(W_+CTSyst(n,r)) \subseteq \mathcal{W_+}(n,r) {\rm \,\,and\,\,}  \mathcal{W_+}(n,r) \cap \chi(W_-CTSyst(n,r)) = \emptyset,$$

$$\chi(W_-CTSyst(n,r)) \subseteq \mathcal{W_-}(n,r) {\rm \,\,and\,\,}  \mathcal{W_-}(n,r) \cap \chi(W_+CTSyst(n,r)) = \emptyset.$$

We can then apply the local criteria to the previous two families of boolean total maps on $S(n,r)$. If we apply the p.l.c. to a map $A\in \mathcal{W_+}(n,r)$, we take the $\mathcal{W_+}(S(n,r),{\bf 2})$-fundamental core, that is obviously also a $\mathcal{W_+}(n,r)$-core. Similarly, if we apply the n.l.c. to a map $A\in \mathcal{W_-}(n,r)$, we take the $\mathcal{W_-}(S(n,r),{\bf 2})$-fundamental core, that is  also a $\mathcal{W_-}(n,r)$-core. In these cases we say simply ``the fundamental core" of $A$.

In the sequel, for semplicity, we will write a partial map as the set of the strings of its domain followed by an $N$ if they are negative or a $P$ if they are positive.
\begin{example}
Let us consider the case $n=5$ and $r=3$. We take the system $\mathcal{S}\in W_-TSyst(5,3)$ such that the relative boolean map $A_{\mathcal{S}}$ associated to it is the following
(the green nodes are P and the red nodes are N):

\begin{center}

\begin{tikzpicture}
 [inner sep=1.0mm,
 placeg/.style={circle,draw=black!100,fill=green!100,thick},
 placer/.style={circle,draw=black!100,fill=red!100,thick},scale=0.4]

 \path
{(0,0)node(1a) [placer,label=270:{\footnotesize$000|12$}]{}}

{ (-4,3)node(1b) [placer,label=180:{\footnotesize$100|12$}]{}}
{ (4,3)node(2b) [placer,label=0:{\footnotesize$000|02$}]{}}

{ (-4,6)node(1c) [placer,label=180:{\footnotesize$200|12$}]{}}
{ (0,6)node(2c) [placer,label=90:{\footnotesize$100|02$}]{}}
{  (4,6)node(3c) [placer,label=0:{\footnotesize$000|01$}]{}}

{ (-8,9)node(1d) [placer,label=180:{\footnotesize$300|12$}]{}}
{ (-4,9)node(2d) [placer,label=180:{\footnotesize$210|12$}]{}}
{ (0,9)node(3d) [placer,label=90:{\footnotesize$200|02$}]{}}
{ (4,9)node(4d) [placer,label=0:{\footnotesize$100|01$}]{}}
{ (8,9)node(5d) [placeg,label=0:{\footnotesize$000|00$}]{}}

{ (-8,12)node(1e) [placer,label=180:{\footnotesize$310|12$}]{}}
{ (-4,12)node(2e) [placer,label=180:{\footnotesize$300|02$}]{}}
{ (0,12)node(3e) [placer,label=90:{\footnotesize$210|02$}]{}}
{ (4,12)node(4e) [placeg,label=0:{\footnotesize$200|01$}]{}}
{ (8,12)node(5e) [placeg,label=0:{\footnotesize$100|00$}]{}}

{ (-8,15)node(1f) [placer,label=180:{\footnotesize$320|12$}]{}}
{ (-4,15)node(2f) [placer,label=180:{\footnotesize$310|02$}]{}}
{ (0,15)node(3f) [placeg,label=90:{\footnotesize$300|01$}]{}}
{ (4,15)node(4f) [placeg,label=0:{\footnotesize$210|01$}]{}}
{ (8,15)node(5f) [placeg,label=0:{\footnotesize$200|00$}]{}}

{ (-8,18)node(1g) [placer,label=180:{\footnotesize$321|12$}]{}}
{ (-4,18)node(2g) [placer,label=180:{\footnotesize$320|02$}]{}}
{ (0,18)node(3g) [placeg,label=90:{\footnotesize$310|01$}]{}}
{ (4,18)node(4g) [placeg,label=0:{\footnotesize$300|00$}]{}}
{ (8,18)node(5g) [placeg,label=0:{\footnotesize$210|00$}]{}}

{  (-4,21)node(1h) [placer,label=180:{\footnotesize$321|02$}]{}}
{ (0,21)node(2h) [placeg,label=90:{\footnotesize$320|01$}]{}}
{(4,21)node(3h) [placeg,label=0:{\footnotesize$310|00$}]{}}

{ (-4,24)node(1i) [placeg,label=180:{\footnotesize$321|01$}]{}}
{ (4,24)node(2i) [placeg,label=0:{\footnotesize$320|00$}]{}}

{ (0,27)node(1j) [placeg,label=90:{\footnotesize$321|00$}]{}};

 \draw (1a)--(1b); 
 \draw (1a)--(2b);

 \draw (1b)--(1c); 
 \draw (1b)--(2c);
 \draw (2b)--(2c);
 \draw (2b)--(3c);

 \draw (1c)--(1d); 
 \draw (1c)--(2d);
 \draw (1c)--(3d);
 \draw (2c)--(3d);
 \draw (2c)--(4d);
 \draw (3c)--(4d);
 \draw (3c)--(5d);

 \draw (1d)--(1e); 
 \draw (1d)--(2e);
 \draw (2d)--(1e);
 \draw (2d)--(3e);
 \draw (3d)--(2e);
 \draw (3d)--(3e);
 \draw (3d)--(4e);
 \draw (4d)--(4e);
 \draw (4d)--(5e);
 \draw (5d)--(5e);

 \draw (1e)--(1f); 
 \draw (1e)--(2f);
 \draw (2e)--(2f);
 \draw (2e)--(3f);
 \draw (3e)--(2f);
 \draw (3e)--(4f);
 \draw (4e)--(3f);
 \draw (4e)--(4f);
 \draw (4e)--(5f);
 \draw (5e)--(5f);

 \draw (1f)--(1g); 
 \draw (1f)--(2g);
 \draw (2f)--(2g);
 \draw (2f)--(3g);
 \draw (3f)--(3g);
 \draw (3f)--(4g);
 \draw (4f)--(3g);
 \draw (4f)--(5g);
 \draw (5f)--(4g);
 \draw (5f)--(5g);

 \draw (1g)--(1h); 
 \draw (2g)--(1h);
 \draw (2g)--(2h);
 \draw (3g)--(2h);
 \draw (3g)--(3h);
 \draw (4g)--(3h);
 \draw (5g)--(3h);

 \draw (1h)--(1i); 
 \draw (2h)--(1i);
 \draw (2h)--(2i);
 \draw (3h)--(2i);

 \draw (1i)--(1j); 
 \draw (2i)--(1j);

 \end{tikzpicture}

\end{center}

It results that $A_{\mathcal{S}}\in \mathcal{W_-}(5,3)$. It is easy to verify that the fundamental core of $A_{\mathcal{S}}$ is the partial map $B=\{ 321|02N, 100|01N, 000|00P, 200|01P \}$. The $(5,3)$-system $\mathcal{S}_B$ is then the following:

$$
\mathcal{S}_B:\left\{ \begin{array}{l}
                    x_3 \ge x_2 \ge x_1 \ge 0 > x_4 \ge x_5  \\
                    x_1 + x_2 + x_3 + x_5 < 0 \\
                    x_1 + x_4 < 0 \\
                    x_2 + x_4 \ge 0
                    \end{array}
                   \right.
$$

A solution of this system is easily given by:
$$x_3=\frac{1}{2}, x_2=\frac{1}{3}, x_1=\frac{1}{6}, x_4=-\frac{1}{5}, x_5=-\frac{6}{5}.$$
Then, by n.l.c. it follows that $\mathcal{S}\in W_-CTSyst(5,3)$, i.e. it is compatible and has the same solutions of $\mathcal{S}_B$.

\end{example}

\begin{example}
Let us consider again the case $n=5$ and $r=3$. We take the system $\mathcal{S}\in W_+TSyst(5,3)$ such that the relative boolean map $A_{\mathcal{S}}$ associated to it is the following
(as before, the green nodes are P and the red nodes are N):

\begin{center}

\begin{tikzpicture}
 [inner sep=1.0mm,
 placeg/.style={circle,draw=black!100,fill=green!100,thick},
 placer/.style={circle,draw=black!100,fill=red!100,thick},scale=0.4]

 \path
{ (0,0)node(1a) [placer,label=270:{\footnotesize$000|12$}]{}}

{ (-4,3)node(1b) [placer,label=180:{\footnotesize$100|12$}]{}}
{(4,3)node(2b) [placer,label=0:{\footnotesize$000|02$}]{}}

{ (-4,6)node(1c) [placer,label=180:{\footnotesize$200|12$}]{}}
{ (0,6)node(2c) [placer,label=90:{\footnotesize$100|02$}]{}}
{  (4,6)node(3c) [placer,label=0:{\footnotesize$000|01$}]{}}

{ (-8,9)node(1d) [placer,label=180:{\footnotesize$300|12$}]{}}
{ (-4,9)node(2d) [placer,label=180:{\footnotesize$210|12$}]{}}
{ (0,9)node(3d) [placer,label=90:{\footnotesize$200|02$}]{}}
{ (4,9)node(4d) [placeg,label=0:{\footnotesize$100|01$}]{}}
{ (8,9)node(5d) [placeg,label=0:{\footnotesize$000|00$}]{}}

{ (-8,12)node(1e) [placer,label=180:{\footnotesize$310|12$}]{}}
{ (-4,12)node(2e) [placer,label=180:{\footnotesize$300|02$}]{}}
{ (0,12)node(3e) [placer,label=90:{\footnotesize$210|02$}]{}}
{ (4,12)node(4e) [placeg,label=0:{\footnotesize$200|01$}]{}}
{ (8,12)node(5e) [placeg,label=0:{\footnotesize$100|00$}]{}}

{ (-8,15)node(1f) [placer,label=180:{\footnotesize$320|12$}]{}}
{ (-4,15)node(2f) [placer,label=180:{\footnotesize$310|02$}]{}}
{ (0,15)node(3f) [placeg,label=90:{\footnotesize$300|01$}]{}}
{ (4,15)node(4f) [placeg,label=0:{\footnotesize$210|01$}]{}}
{ (8,15)node(5f) [placeg,label=0:{\footnotesize$200|00$}]{}}

{ (-8,18)node(1g) [placeg,label=180:{\footnotesize$321|12$}]{}}
{ (-4,18)node(2g) [placer,label=180:{\footnotesize$320|02$}]{}}
{ (0,18)node(3g) [placeg,label=90:{\footnotesize$310|01$}]{}}
{ (4,18)node(4g) [placeg,label=0:{\footnotesize$300|00$}]{}}
{ (8,18)node(5g) [placeg,label=0:{\footnotesize$210|00$}]{}}

{  (-4,21)node(1h) [placeg,label=180:{\footnotesize$321|02$}]{}}
{ (0,21)node(2h) [placeg,label=90:{\footnotesize$320|01$}]{}}
{(4,21)node(3h) [placeg,label=0:{\footnotesize$310|00$}]{}}

{ (-4,24)node(1i) [placeg,label=180:{\footnotesize$321|01$}]{}}
{ (4,24)node(2i) [placeg,label=0:{\footnotesize$320|00$}]{}}

{ (0,27)node(1j) [placeg,label=90:{\footnotesize$321|00$}]{}};

 \draw (1a)--(1b); 
 \draw (1a)--(2b);

 \draw (1b)--(1c); 
 \draw (1b)--(2c);
 \draw (2b)--(2c);
 \draw (2b)--(3c);

 \draw (1c)--(1d); 
 \draw (1c)--(2d);
 \draw (1c)--(3d);
 \draw (2c)--(3d);
 \draw (2c)--(4d);
 \draw (3c)--(4d);
 \draw (3c)--(5d);

 \draw (1d)--(1e); 
 \draw (1d)--(2e);
 \draw (2d)--(1e);
 \draw (2d)--(3e);
 \draw (3d)--(2e);
 \draw (3d)--(3e);
 \draw (3d)--(4e);
 \draw (4d)--(4e);
 \draw (4d)--(5e);
 \draw (5d)--(5e);

 \draw (1e)--(1f); 
 \draw (1e)--(2f);
 \draw (2e)--(2f);
 \draw (2e)--(3f);
 \draw (3e)--(2f);
 \draw (3e)--(4f);
 \draw (4e)--(3f);
 \draw (4e)--(4f);
 \draw (4e)--(5f);
 \draw (5e)--(5f);

 \draw (1f)--(1g); 
 \draw (1f)--(2g);
 \draw (2f)--(2g);
 \draw (2f)--(3g);
 \draw (3f)--(3g);
 \draw (3f)--(4g);
 \draw (4f)--(3g);
 \draw (4f)--(5g);
 \draw (5f)--(4g);
 \draw (5f)--(5g);

 \draw (1g)--(1h); 
 \draw (2g)--(1h);
 \draw (2g)--(2h);
 \draw (3g)--(2h);
 \draw (3g)--(3h);
 \draw (4g)--(3h);
 \draw (5g)--(3h);

 \draw (1h)--(1i); 
 \draw (2h)--(1i);
 \draw (2h)--(2i);
 \draw (3h)--(2i);

 \draw (1i)--(1j); 
 \draw (2i)--(1j);

 \end{tikzpicture}

\end{center}

It is easy to verify that the fundamental core of $A_{\mathcal{S}}$ is the partial map\\
 $B=\{ 320|02N, 321|12P, 000|00P, 000|01N \}$. Therefore the system $\mathcal{S}_B$ is the following :

$$
\left\{ \begin{array}{l}
                    x_3 \ge x_2 \ge x_1 \ge 0 > x_4 \ge x_5  \\
                    x_1 + x_2 + x_3 + x_4 + x_5 \ge 0 \\
                    x_3 + x_2 + x_5 < 0
                    \end{array}
                   \right.
$$

A solution of this system is easily given by:
$$x_3=1, x_2=1, x_1=0.9, x_4=-0.8, x_5=-2.1.$$

By p.l.c. it follows then that $\mathcal{S}\in W_+CTSyst(5,3)$, i.e. $\mathcal{S}$ is compatible and equivalent to $\mathcal{S}_B$.

\end{example}

In the previous example, we showed two different $(5,3)$-total systems both compatible. In the next example, we show a $(6,3)$-total system $\mathcal{S}$ that is not compatible but such that $A_{\mathcal{S}}\in \mathcal{W}_+(6,3)$. Hence the next example shows that the inclusion $\chi(W_+CTSyst(n,r)) \subseteq \mathcal{W_+}(n,r)$ is strict, i.e. there exist maps in $\mathcal{W_+}(n,r)$ whose associated $(n,r)$-system has not solutions.

\begin{example}
Let us consider the following map $A\in \mathcal{BT}(6,3)$ (the green nodes are P and the red nodes are N):

\begin{center}

\begin{tikzpicture}
 [inner sep=1.0mm,
 placeg/.style={circle,draw=black!100,fill=green!100,thick},
 placer/.style={circle,draw=black!100,fill=red!100,thick},
 scale=0.4]

 \path
 (-24,0)node(1a) [placer,label=270:{\footnotesize$000|123$}]{}

 (-27,3)node(1b) [placer,label=180:{\footnotesize$100|123$}]{}
 (-21,3)node(2b) [placer,label=0:{\footnotesize$000|023$}]{}

 (-30,6)node(1c) [placer,label=180:{\footnotesize$200|123$}]{}
 (-24,6)node(2c) [placer,label=90:{\footnotesize$100|023$}]{}
 (-18,6)node(3c) [placer,label=0:{\footnotesize$000|013$}]{}

 (-35,9)node(1d) [placer,label=180:{\footnotesize$300|123$}]{}
 (-31,9)node(2d) [placer,label=180:{\footnotesize$210|123$}]{}
 (-27,9)node(3d) [placer,label=180:{\footnotesize$200|023$}]{}
 (-21,9)node(4d) [placer,label=0:{\footnotesize$100|013$}]{}
 (-17,9)node(5d) [placer,label=0:{\footnotesize$000|012$}]{}
 (-13,9)node(6d) [placer,label=0:{\footnotesize$000|003$}]{}

 (-35,12)node(1e) [placer,label=180:{\footnotesize$310|123$}]{}
 (-31,12)node(2e) [placer,label=180:{\footnotesize$300|023$}]{}
 (-27,12)node(3e) [placer,label=180:{\footnotesize$210|023$}]{}
 (-24,12)node(4e) [placer,label=90:{\footnotesize$200|013$}]{}
 (-21,12)node(5e) [placer,label=0:{\footnotesize$100|012$}]{}
 (-17,12)node(6e) [placer,label=0:{\footnotesize$100|003$}]{}
 (-13,12)node(7e) [placer,label=0:{\footnotesize$000|002$}]{}

 (-38,15)node(1f) [placer,label=180:{\footnotesize$320|123$}]{}
 (-34,15)node(2f) [placer,label=180:{\footnotesize$310|023$}]{}
 (-30,15)node(3f) [placer,label=180:{\footnotesize$300|013$}]{}
 (-26,15)node(4f) [placer,label=180:{\footnotesize$210|013$}]{}
 (-22,15)node(5f) [placer,label=0:{\footnotesize$200|012$}]{}
 (-18,15)node(6f) [placer,label=0:{\footnotesize$200|003$}]{}
 (-14,15)node(7f) [placer,label=0:{\footnotesize$100|002$}]{}
 (-10,15)node(8f) [placer,label=0:{\footnotesize$000|001$}]{}

 (-42,18)node(1g) [placeg,label=180:{\footnotesize$321|123$}]{}
 (-38,18)node(2g) [placeg,label=180:{\footnotesize$320|023$}]{}
 (-34,18)node(3g) [placeg,label=180:{\footnotesize$310|013$}]{}
 (-30,18)node(4g) [placeg,label=180:{\footnotesize$300|012$}]{}
 (-26,18)node(5g) [placer,label=180:{\footnotesize$300|003$}]{}
 (-22,18)node(6g) [placeg,label=0:{\footnotesize$210|012$}]{}
 (-18,18)node(7g) [placer,label=0:{\footnotesize$210|003$}]{}
 (-14,18)node(8g) [placer,label=0:{\footnotesize$200|002$}]{}
 (-10,18)node(9g) [placer,label=0:{\footnotesize$100|001$}]{}
 (-6,18)node(10g) [placeg,label=270:{\footnotesize$000|000$}]{}

 (-38,21)node(1h) [placeg,label=180:{\footnotesize$321|023$}]{}
 (-34,21)node(2h) [placeg,label=180:{\footnotesize$320|013$}]{}
 (-30,21)node(3h) [placeg,label=180:{\footnotesize$310|012$}]{}
 (-26,21)node(4h) [placeg,label=180:{\footnotesize$310|003$}]{}
 (-22,21)node(5h) [placeg,label=0:{\footnotesize$300|002$}]{}
 (-18,21)node(6h) [placeg,label=0:{\footnotesize$210|002$}]{}
 (-14,21)node(7h) [placeg,label=0:{\footnotesize$200|001$}]{}
 (-10,21)node(8h) [placeg,label=0:{\footnotesize$100|000$}]{}

 (-35,24)node(1i) [placeg,label=180:{\footnotesize$321|013$}]{}
 (-31,24)node(2i) [placeg,label=180:{\footnotesize$320|012$}]{}
 (-27,24)node(3i) [placeg,label=180:{\footnotesize$320|003$}]{}
 (-24,24)node(4i) [placeg,label=90:{\footnotesize$310|002$}]{}
 (-21,24)node(5i) [placeg,label=0:{\footnotesize$300|001$}]{}
 (-17,24)node(6i) [placeg,label=0:{\footnotesize$210|001$}]{}
 (-13,24)node(7i) [placeg,label=0:{\footnotesize$200|000$}]{}

 (-35,27)node(1j) [placeg,label=180:{\footnotesize$321|012$}]{}
 (-31,27)node(2j) [placeg,label=180:{\footnotesize$321|003$}]{}
 (-27,27)node(3j) [placeg,label=180:{\footnotesize$320|002$}]{}
 (-21,27)node(4j) [placeg,label=0:{\footnotesize$310|001$}]{}
 (-17,27)node(5j) [placeg,label=0:{\footnotesize$300|000$}]{}
 (-13,27)node(6j) [placeg,label=0:{\footnotesize$210|000$}]{}

 (-30,30)node(1k) [placeg,label=180:{\footnotesize$321|002$}]{}
 (-24,30)node(2k) [placeg,label=90:{\footnotesize$320|001$}]{}
 (-18,30)node(3k) [placeg,label=0:{\footnotesize$310|000$}]{}

 (-27,33)node(1l) [placeg,label=180:{\footnotesize$321|001$}]{}
 (-21,33)node(2l) [placeg,label=0:{\footnotesize$320|000$}]{}

 (-24,36)node(1m) [placeg,label=90:{\footnotesize$321|000$}]{};

 \draw (1a)--(1b); 
 \draw (1a)--(2b);

 \draw (1b)--(1c); 
 \draw (1b)--(2c);
 \draw (2b)--(2c);
 \draw (2b)--(3c);

 \draw (1c)--(1d); 
 \draw (1c)--(2d);
 \draw (1c)--(3d);
 \draw (2c)--(3d);
 \draw (2c)--(4d);
 \draw (3c)--(4d);
 \draw (3c)--(5d);
 \draw (3c)--(6d);

 \draw (1d)--(1e); 
 \draw (1d)--(2e);
 \draw (2d)--(1e);
 \draw (2d)--(3e);
 \draw (3d)--(2e);
 \draw (3d)--(3e);
 \draw (3d)--(4e);
 \draw (4d)--(4e);
 \draw (4d)--(5e);
 \draw (4d)--(6e);
 \draw (5d)--(5e);
 \draw (5d)--(7e);
 \draw (6d)--(6e);
 \draw (6d)--(7e);

 \draw (1e)--(1f); 
 \draw (1e)--(2f);
 \draw (2e)--(2f);
 \draw (2e)--(3f);
 \draw (3e)--(2f);
 \draw (3e)--(4f);
 \draw (4e)--(3f);
 \draw (4e)--(4f);
 \draw (4e)--(5f);
 \draw (4e)--(6f);
 \draw (5e)--(5f);
 \draw (5e)--(7f);
 \draw (6e)--(6f);
 \draw (6e)--(7f);
 \draw (7e)--(7f);
 \draw (7e)--(8f);

 \draw (1f)--(1g); 
 \draw (1f)--(2g);
 \draw (2f)--(2g);
 \draw (2f)--(3g);
 \draw (3f)--(3g);
 \draw (3f)--(4g);
 \draw (3f)--(5g);
 \draw (4f)--(3g);
 \draw (4f)--(6g);
 \draw (4f)--(7g);
 \draw (5f)--(4g);
 \draw (5f)--(6g);
 \draw (5f)--(8g);
 \draw (6f)--(5g);
 \draw (6f)--(7g);
 \draw (6f)--(8g);
 \draw (7f)--(8g);
 \draw (7f)--(9g);
 \draw (8f)--(9g);
 \draw (8f)--(10g);

 \draw (1g)--(1h); 
 \draw (2g)--(1h);
 \draw (2g)--(2h);
 \draw (3g)--(2h);
 \draw (3g)--(3h);
 \draw (3g)--(4h);
 \draw (4g)--(3h);
 \draw (4g)--(5h);
 \draw (5g)--(4h);
 \draw (5g)--(5h);
 \draw (6g)--(3h);
 \draw (6g)--(6h);
 \draw (7g)--(4h);
 \draw (7g)--(6h);
 \draw (8g)--(5h);
 \draw (8g)--(6h);
 \draw (8g)--(7h);
 \draw (9g)--(7h);
 \draw (9g)--(8h);
 \draw (10g)--(8h);

 \draw (1h)--(1i); 
 \draw (2h)--(1i);
 \draw (2h)--(2i);
 \draw (2h)--(3i);
 \draw (3h)--(2i);
 \draw (3h)--(4i);
 \draw (4h)--(3i);
 \draw (4h)--(4i);
 \draw (5h)--(4i);
 \draw (5h)--(5i);
 \draw (6h)--(4i);
 \draw (6h)--(6i);
 \draw (7h)--(5i);
 \draw (7h)--(6i);
 \draw (7h)--(7i);
 \draw (8h)--(7i);

 \draw (1i)--(1j); 
 \draw (1i)--(2j);
 \draw (2i)--(1j);
 \draw (2i)--(3j);
 \draw (3i)--(2j);
 \draw (3i)--(3j);
 \draw (4i)--(3j);
 \draw (4i)--(4j);
 \draw (5i)--(4j);
 \draw (5i)--(5j);
 \draw (6i)--(4j);
 \draw (6i)--(6j);
 \draw (7i)--(5j);
 \draw (7i)--(6j);

 \draw (1j)--(1k); 
 \draw (2j)--(1k);
 \draw (3j)--(1k);
 \draw (3j)--(2k);
 \draw (4j)--(2k);
 \draw (4j)--(3k);
 \draw (5j)--(3k);
 \draw (6j)--(3k);

 \draw (1k)--(1l); 
 \draw (2k)--(1l);
 \draw (2k)--(2l);
 \draw (3k)--(2l);

 \draw (1l)--(1m); 
 \draw (2l)--(1m);

 \end{tikzpicture}

\end{center}

It is easy to observe that $A\in \mathcal{W}_+(6,3)$ and that the fundamental core of $A$ is the following partial map:

$B=\{321|123P, 300|003N, 210|003N, 200|002N, 100|001N, 000|000P \}$.

Hence $\mathcal{S}_B$ is the following $(6,3)$-positively weighted system :

$$
\left\{ \begin{array}{l}
                    x_3 \ge x_2 \ge x_1 \ge 0 > y_1 \ge y_2 \ge y_3  \\
                    x_1 + x_2 + x_3 + y_1 + y_2 + y_3 \ge 0 \\
                    x_3 + y_3 < 0 \\
                    x_2 + y_2 < 0 \\
                    x_1 + y_1 < 0 \\
                    x_2 + x_1 + y_3 < 0.
                    \end{array}
                   \right.
$$

Obviously the previous system $\mathcal{S}_B$ is not compatible, therefore also $\mathcal{S}_A$ is not compatible. Hence $A\in \mathcal{W}_+(6,3)$, but $A \notin \chi(W_+CTSyst(6,3))$.

Let us note that, for the previous map $A$, there does not exist an $f\in W_+F(n,r)$ such that $A=A_f$; therefore this example shows that the answer to the open problems raised in \cite{BisChias} is negative.
\end{example}

The last example tell us that the family $\mathcal{W}_+(n,r)$ does not capture all the properties of the systems in $W_+CTSyst(n,r)$, therefore we give now a more restrictive condition on a family of boolean maps to catch all the properties that characterize a system in $W_+CTSyst(n,r)$.

If $w$ is a string in $S(n,r)$ in the form (\ref{stringa}) with $i_1 \succ \dots \succ i_p \succ 0^§$, $i_{p+1}=\dots i_r= 0^§$ and $j_1 = \dots = j_{q-1} = 0^§$, $0^§ \succ j_q \succ \dots \succ j_{n-r}$, for some indexes $p$ and $q$, we set, see \cite{BisChias}:
\begin{equation*}
w^*=\{ i_1, \cdots, i_p, j_q, \cdots, j_{n-r} \}.
\end{equation*}
For example, if $w=4310|013 \in S(7,4),$ then $w^*=\{ \tilde{1}, \tilde{3}, \tilde{4}, \overline{1}, \overline{3} \}.$ In particular, if $w=0 \cdots 0|0 \cdots 0$ then $w^*=\emptyset.$ \\
It stays defined a bijective map
$$
*: w \in S(n,r) \mapsto w^* \in \mathcal{P} (A(n,r) \setminus \{ 0^§ \} ).
$$
\begin{definition}
If $w\in S(n,r)$, a partition of $w$ is a subset $\{w_1,\cdots,w_k\}$ of $S(n,r)$ such that $\{w_1^*,\cdots,w_k^*\}$ is a set-partition of $w^*$. If $\{w_1,\cdots,w_k\}$ is a partition of $w$ we write $w:w_1 \wr \cdots \wr w_k$.
\end{definition}
\begin{example}
If $w=7543100|0013 \in S(11,7)$,
then $w: 7000000|0000 \wr 5430000|0001 \wr 1000000|0003$.
\end{example}
\begin{definition}
If $A$ is a BPM on $S(n,r)$, we say that $A$ is complemented pointwise if for each $w\in dom(A)$ such that $A(w)=T$, where $T=P$ or $T=N$, and for each partition $w : w_1 \wr\cdots\wr w_k$, with $\{w_1,\cdots,w_k\}\subseteq dom(A)$, we have $A(w_i)=T$ for some $i\in \{1,\cdots,k\}$.
\end{definition}
\begin{definition}
We say that a map $A$ is +formally compatible [-formally compatible] on $S(n,r)$ if :
\begin{itemize}
\item[-] $A\in \mathcal{OP}(n,r)$;\\
\item[-] $A$ is complemented pointwise; \\
\item[-] $A(r\cdots21|12\cdots(n-r)) = P$ [$A(r\cdots21|12\cdots(n-r)) = N$].
\end{itemize}
\end{definition}

We denote by $\mathcal{FC}_+(n,r)$ [$\mathcal{FC}_-(n,r)$] the family of all the maps +formally compatible [-formally compatible] on $S(n,r)$.

It is immediate to observe that $\mathcal{FC}_+(n,r) \subseteq \mathcal{W}_+(n,r)$  [$\mathcal{FC}_-(n,r) \subseteq \mathcal{W}_-(n,r)$] and $\chi(W_+CTSyst(n,r)) \subseteq \mathcal{FC}_+(n,r)$ [$\chi(W_-CTSyst(n,r)) \subseteq \mathcal{FC}_-(n,r)$].

As already indicated in the introduction, we ask :

{\bf open problems :}

Q2) $\chi(W_+CTSyst(n,r))=\mathcal{FC}_+(n,r)$?

Q3) $\chi(W_-CTSyst(n,r))=\mathcal{FC}_-(n,r)$?

The next example shows a map $A \in \mathcal{W}_-(n,r)$ whose $\mathcal{W}_-(n,r)$-fundamental core is complemented pointwise and such that $A$ is not -formally compatible: this proves that the open problem $Q3)$ is false if the system and the boolean map are not total. We can provide an analogue example for the case $A \in \mathcal{W}_+(n,r)$, hence also the problem $Q2)$ is false if the system and the boolean map are not total.

\begin{example}
Let us consider the following map $A\in \mathcal{BT}(6,3)$ (the green nodes are P and the red nodes are N):

\begin{center}

\begin{tikzpicture}
 [inner sep=1.0mm,
 placeg/.style={circle,draw=black!100,fill=green!100,thick},
 placer/.style={circle,draw=black!100,fill=red!100,thick},
 scale=0.4]

 \path
 (-24,0)node(1a) [placer,label=270:{\footnotesize$000|123$}]{}

 (-27,3)node(1b) [placer,label=180:{\footnotesize$100|123$}]{}
 (-21,3)node(2b) [placer,label=0:{\footnotesize$000|023$}]{}

 (-30,6)node(1c) [placer,label=180:{\footnotesize$200|123$}]{}
 (-24,6)node(2c) [placer,label=90:{\footnotesize$100|023$}]{}
 (-18,6)node(3c) [placer,label=0:{\footnotesize$000|013$}]{}

 (-35,9)node(1d) [placer,label=180:{\footnotesize$300|123$}]{}
 (-31,9)node(2d) [placer,label=180:{\footnotesize$210|123$}]{}
 (-27,9)node(3d) [placer,label=180:{\footnotesize$200|023$}]{}
 (-21,9)node(4d) [placer,label=0:{\footnotesize$100|013$}]{}
 (-17,9)node(5d) [placer,label=0:{\footnotesize$000|012$}]{}
 (-13,9)node(6d) [placer,label=0:{\footnotesize$000|003$}]{}

 (-35,12)node(1e) [placer,label=180:{\footnotesize$310|123$}]{}
 (-31,12)node(2e) [placer,label=180:{\footnotesize$300|023$}]{}
 (-27,12)node(3e) [placer,label=180:{\footnotesize$210|023$}]{}
 (-24,12)node(4e) [placer,label=90:{\footnotesize$200|013$}]{}
 (-21,12)node(5e) [placer,label=0:{\footnotesize$100|012$}]{}
 (-17,12)node(6e) [placeg,label=0:{\footnotesize$100|003$}]{}
 (-13,12)node(7e) [placer,label=0:{\footnotesize$000|002$}]{}

 (-38,15)node(1f) [placer,label=180:{\footnotesize$320|123$}]{}
 (-34,15)node(2f) [placer,label=180:{\footnotesize$310|023$}]{}
 (-30,15)node(3f) [placer,label=180:{\footnotesize$300|013$}]{}
 (-26,15)node(4f) [placer,label=180:{\footnotesize$210|013$}]{}
 (-22,15)node(5f) [placer,label=0:{\footnotesize$200|012$}]{}
 (-18,15)node(6f) [placeg,label=0:{\footnotesize$200|003$}]{}
 (-14,15)node(7f) [placeg,label=0:{\footnotesize$100|002$}]{}
 (-10,15)node(8f) [placer,label=0:{\footnotesize$000|001$}]{}

 (-42,18)node(1g) [placer,label=180:{\footnotesize$321|123$}]{}
 (-38,18)node(2g) [placer,label=180:{\footnotesize$320|023$}]{}
 (-34,18)node(3g) [placer,label=180:{\footnotesize$310|013$}]{}
 (-30,18)node(4g) [placer,label=180:{\footnotesize$300|012$}]{}
 (-26,18)node(5g) [placeg,label=180:{\footnotesize$300|003$}]{}
 (-22,18)node(6g) [placer,label=0:{\footnotesize$210|012$}]{}
 (-18,18)node(7g) [placeg,label=0:{\footnotesize$210|003$}]{}
 (-14,18)node(8g) [placeg,label=0:{\footnotesize$200|002$}]{}
 (-10,18)node(9g) [placeg,label=0:{\footnotesize$100|001$}]{}
 (-6,18)node(10g) [placeg,label=270:{\footnotesize$000|000$}]{}

 (-38,21)node(1h) [placer,label=180:{\footnotesize$321|023$}]{}
 (-34,21)node(2h) [placer,label=180:{\footnotesize$320|013$}]{}
 (-30,21)node(3h) [placer,label=180:{\footnotesize$310|012$}]{}
 (-26,21)node(4h) [placeg,label=180:{\footnotesize$310|003$}]{}
 (-22,21)node(5h) [placeg,label=0:{\footnotesize$300|002$}]{}
 (-18,21)node(6h) [placeg,label=0:{\footnotesize$210|002$}]{}
 (-14,21)node(7h) [placeg,label=0:{\footnotesize$200|001$}]{}
 (-10,21)node(8h) [placeg,label=0:{\footnotesize$100|000$}]{}

 (-35,24)node(1i) [placer,label=180:{\footnotesize$321|013$}]{}
 (-31,24)node(2i) [placer,label=180:{\footnotesize$320|012$}]{}
 (-27,24)node(3i) [placeg,label=180:{\footnotesize$320|003$}]{}
 (-24,24)node(4i) [placeg,label=90:{\footnotesize$310|002$}]{}
 (-21,24)node(5i) [placeg,label=0:{\footnotesize$300|001$}]{}
 (-17,24)node(6i) [placeg,label=0:{\footnotesize$210|001$}]{}
 (-13,24)node(7i) [placeg,label=0:{\footnotesize$200|000$}]{}

 (-35,27)node(1j) [placer,label=180:{\footnotesize$321|012$}]{}
 (-31,27)node(2j) [placeg,label=180:{\footnotesize$321|003$}]{}
 (-27,27)node(3j) [placeg,label=180:{\footnotesize$320|002$}]{}
 (-21,27)node(4j) [placeg,label=0:{\footnotesize$310|001$}]{}
 (-17,27)node(5j) [placeg,label=0:{\footnotesize$300|000$}]{}
 (-13,27)node(6j) [placeg,label=0:{\footnotesize$210|000$}]{}

 (-30,30)node(1k) [placeg,label=180:{\footnotesize$321|002$}]{}
 (-24,30)node(2k) [placeg,label=90:{\footnotesize$320|001$}]{}
 (-18,30)node(3k) [placeg,label=0:{\footnotesize$310|000$}]{}

 (-27,33)node(1l) [placeg,label=180:{\footnotesize$321|001$}]{}
 (-21,33)node(2l) [placeg,label=0:{\footnotesize$320|000$}]{}

 (-24,36)node(1m) [placeg,label=90:{\footnotesize$321|000$}]{};

 \draw (1a)--(1b); 
 \draw (1a)--(2b);

 \draw (1b)--(1c); 
 \draw (1b)--(2c);
 \draw (2b)--(2c);
 \draw (2b)--(3c);

 \draw (1c)--(1d); 
 \draw (1c)--(2d);
 \draw (1c)--(3d);
 \draw (2c)--(3d);
 \draw (2c)--(4d);
 \draw (3c)--(4d);
 \draw (3c)--(5d);
 \draw (3c)--(6d);

 \draw (1d)--(1e); 
 \draw (1d)--(2e);
 \draw (2d)--(1e);
 \draw (2d)--(3e);
 \draw (3d)--(2e);
 \draw (3d)--(3e);
 \draw (3d)--(4e);
 \draw (4d)--(4e);
 \draw (4d)--(5e);
 \draw (4d)--(6e);
 \draw (5d)--(5e);
 \draw (5d)--(7e);
 \draw (6d)--(6e);
 \draw (6d)--(7e);

 \draw (1e)--(1f); 
 \draw (1e)--(2f);
 \draw (2e)--(2f);
 \draw (2e)--(3f);
 \draw (3e)--(2f);
 \draw (3e)--(4f);
 \draw (4e)--(3f);
 \draw (4e)--(4f);
 \draw (4e)--(5f);
 \draw (4e)--(6f);
 \draw (5e)--(5f);
 \draw (5e)--(7f);
 \draw (6e)--(6f);
 \draw (6e)--(7f);
 \draw (7e)--(7f);
 \draw (7e)--(8f);

 \draw (1f)--(1g); 
 \draw (1f)--(2g);
 \draw (2f)--(2g);
 \draw (2f)--(3g);
 \draw (3f)--(3g);
 \draw (3f)--(4g);
 \draw (3f)--(5g);
 \draw (4f)--(3g);
 \draw (4f)--(6g);
 \draw (4f)--(7g);
 \draw (5f)--(4g);
 \draw (5f)--(6g);
 \draw (5f)--(8g);
 \draw (6f)--(5g);
 \draw (6f)--(7g);
 \draw (6f)--(8g);
 \draw (7f)--(8g);
 \draw (7f)--(9g);
 \draw (8f)--(9g);
 \draw (8f)--(10g);

 \draw (1g)--(1h); 
 \draw (2g)--(1h);
 \draw (2g)--(2h);
 \draw (3g)--(2h);
 \draw (3g)--(3h);
 \draw (3g)--(4h);
 \draw (4g)--(3h);
 \draw (4g)--(5h);
 \draw (5g)--(4h);
 \draw (5g)--(5h);
 \draw (6g)--(3h);
 \draw (6g)--(6h);
 \draw (7g)--(4h);
 \draw (7g)--(6h);
 \draw (8g)--(5h);
 \draw (8g)--(6h);
 \draw (8g)--(7h);
 \draw (9g)--(7h);
 \draw (9g)--(8h);
 \draw (10g)--(8h);

 \draw (1h)--(1i); 
 \draw (2h)--(1i);
 \draw (2h)--(2i);
 \draw (2h)--(3i);
 \draw (3h)--(2i);
 \draw (3h)--(4i);
 \draw (4h)--(3i);
 \draw (4h)--(4i);
 \draw (5h)--(4i);
 \draw (5h)--(5i);
 \draw (6h)--(4i);
 \draw (6h)--(6i);
 \draw (7h)--(5i);
 \draw (7h)--(6i);
 \draw (7h)--(7i);
 \draw (8h)--(7i);

 \draw (1i)--(1j); 
 \draw (1i)--(2j);
 \draw (2i)--(1j);
 \draw (2i)--(3j);
 \draw (3i)--(2j);
 \draw (3i)--(3j);
 \draw (4i)--(3j);
 \draw (4i)--(4j);
 \draw (5i)--(4j);
 \draw (5i)--(5j);
 \draw (6i)--(4j);
 \draw (6i)--(6j);
 \draw (7i)--(5j);
 \draw (7i)--(6j);

 \draw (1j)--(1k); 
 \draw (2j)--(1k);
 \draw (3j)--(1k);
 \draw (3j)--(2k);
 \draw (4j)--(2k);
 \draw (4j)--(3k);
 \draw (5j)--(3k);
 \draw (6j)--(3k);

 \draw (1k)--(1l); 
 \draw (2k)--(1l);
 \draw (2k)--(2l);
 \draw (3k)--(2l);

 \draw (1l)--(1m); 
 \draw (2l)--(1m);

 \end{tikzpicture}

\end{center}

It is easy to observe that $A\in \mathcal{W}_-(6,3)$ and that the $\mathcal{W}_-(6,3)$-fundamental core of $A$ is the following partial map:

$B=\{321|012N, 000|001N, 100|003P, 000|000P \}$.

Then $B$ is a BPM on $S(6,3)$ that is complemented pointwise, but $A \notin \mathcal{FC}_-(6,3)$. In fact, if we take the string $w=321|123$, we have $w : 300|003 \wr 100|002 \wr 200|001$, with $A(w)=N$ and $A(300|003)=A(100|002)=A(200|001)=P$.

The system $\mathcal{S}_B$ determined from  $B$ is the following $(6,3)$-weighted system :

$$
\left\{ \begin{array}{l}
                    x_3 \ge x_2 \ge x_1 \ge 0 > y_1 \ge y_2 \ge y_3  \\
                    x_3 + x_2 + x_1 + y_1 + y_2 + y_3 < 0\\
                    x_1 + y_3 \ge 0 \\
          \end{array}
           \right.
$$

The system $\mathcal{S}_B$ is not compatible, because if it were then by n.l.c. also $\mathcal{S}_A$ would be compatible and hence $A \in \chi(W_-CTSyst(6,3)) \subseteq \mathcal{FC}_-(6,3)$, which is a contradiction.

\end{example}

\bigskip

\end{document}